\documentclass[oneside]{amsart}
\usepackage[utf8]{inputenc}
\usepackage{microtype}
\usepackage{hyperref}
\usepackage{tikz,tikz-cd, xypic}
\usepackage[]{todonotes}
\usepackage{geometry}

	\usepackage[frenchmath]{mathastext}
	\usepackage{MnSymbol}
	\usepackage{eucal}

	\usepackage{amsthm}
	\usepackage{thmtools}
	\numberwithin{equation}{section}
	\theoremstyle{definition}

	\newtheorem*{lem*}{Lemma}
	\newtheorem*{thrm*}{Theorem}
	\newtheorem*{ex*}{Example}
	\newtheorem*{fact*}{Fact}
	\newtheorem*{not*}{Notation}
	\newtheorem*{defn*}{Definition}
	\newtheorem*{prop*}{Proposition}
	\newtheorem*{rmk*}{Remark}
	
	\newtheorem{defn}{Definition}[section]

	\newtheorem{prop}[defn]{Proposition}
	
	\newtheorem{rmk}[defn]{Remark}
	\newtheorem{thrm}[defn]{Theorem}
	\newtheorem{cor}[defn]{Corollary}
	\newtheorem{lem}[defn]{Lemma}

	\newcommand{\mathcats}{\mathcal} 
	\newcommand{\cat}{\mathcal}

    
	\newcommand{\C}{\mathcats C}
	\DeclareMathOperator{\Ann}{Ann}
	\DeclareMathOperator{\Aut}{Aut}
	\DeclareMathOperator{\Coh}{Coh}
	\DeclareMathOperator{\codim}{codim}
	\DeclareMathOperator{\coker}{coker}
	\DeclareMathOperator{\End}{End}

	\DeclareMathOperator{\Hom}{Hom}
	\DeclareMathOperator{\Id}{Id}

	\renewcommand{\mod}{\text{mod}}
	\makeatletter\let\O\@undefined\makeatother
	\newcommand{\O}{\mathcal O}
	
	\DeclareMathOperator{\Pic}{Pic}
	
	\DeclareMathOperator{\Spec}{Spec}
	\DeclareMathOperator{\supp}{supp}

	\newcommand{\N}{\mathbf N}
	
	\makeatletter\let\k\@undefined\makeatother
	\newcommand{\k}{\mathbf{k}}

	\newcommand{\m}{\mathfrak{m}}
	
	\makeatletter\let\P\@undefined\makeatother
	\newcommand{\P}{\mathbf{P}}

	\newcommand{\qto}{\rightharpoonup}

	\DeclareMathOperator{\lHom}{\underline{Hom}}

	\newcommand{\into}{\hookrightarrow}
	\newcommand{\onto}{\twoheadrightarrow}
	


\begin{document}
\begin{abstract}
	Extending work of Meinhardt and Partsch, we prove that two varieties are isomorphic in codimension $c$ if and only if certain quotients of their categories of coherent sheaves are equivalent.
	This result interpolates between Gabriel's reconstruction theorem and the fact that two varieties are birational if and only if they have the same function field.
\end{abstract}
\title{Gabriel's theorem and birational geometry}
\author{John Calabrese, Roberto Pirisi}
\date{\today}
\maketitle

It is a well known fact that two varieties (i.e. irreducible and reduced schemes of finite type over a field $\k$) $X$ and $Y$ are birational if and only if their function fields are isomorphic.
At the same time, a theorem of Gabriel says that $X$ and $Y$ are isomorphic if and only if their categories of coherent sheaves are equivalent (as $\k$-linear categories).
In this article we show that these results are actually related: they are the two extreme cases of our main theorem.

Before giving a precise statement, we introduce some notation.
For an integer $k$, we write $\Coh_{\leq k}(X) \subset \Coh(X)$ for the subcategory of sheaves supported in dimension at most $k$.
There is a robust theory of quotients of abelian categories, and we define $\C_k(X) \coloneq \Coh(X)/\Coh_{\leq k-1}(X)$.
It is often convenient to re-index these categories by codimension, defining $\C^c(X) \coloneq \C_{\dim X - c}(X)$.
We have $\C^{\dim X}(X) = \Coh(X)$, and one shows that $\C^0(X)$ is equivalent to finite-dimensional vector spaces over the function field of $X$.
Finally, recall that two varieties $X$ and $Y$ are \emph{isomorphic in codimension $c$} if there exist open subsets $U \subset X$, $V \subset Y$ such that $U$ is isomorphic to $V$, and the codimensions of $X \setminus U$ and $Y \setminus V$ are at least $c$.
In particular, $X$ and $Y$ are birational if and only if they are isomorphic in codimension zero.
\begin{thrm*}
	Two varieties $X$ and $Y$ are isomorphic in codimension $c$ if and only if the categories $\C^c(X)$, $\C^c(Y)$ are equivalent.
\end{thrm*}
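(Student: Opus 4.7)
The plan is to reduce both implications to a single key lemma: for any open immersion $j \colon U \hookrightarrow X$ whose complement has codimension large enough that sheaves supported on $X \setminus U$ belong to $\Coh_{\leq \dim X - c - 1}(X)$, the restriction functor $j^*$ descends to an equivalence $\C^c(X) \simeq \C^c(U)$.  The underlying fact is that $j^*$ exhibits $\Coh(U)$ as the Serre quotient of $\Coh(X)$ by sheaves supported on the complement, and iterating Serre quotients commutes appropriately.  Granted the lemma, the forward direction is immediate: from $\varphi \colon U \xrightarrow{\sim} V$ provided by an isomorphism in codimension $c$, we splice together
\[
\C^c(X) \,\simeq\, \C^c(U) \,\xrightarrow{\varphi^*}\, \C^c(V) \,\simeq\, \C^c(Y).
\]

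\medskip

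The substance lies in the backward direction.  Given a $\k$-linear equivalence $\Phi \colon \C^c(X) \xrightarrow{\sim} \C^c(Y)$, we must produce an isomorphism between opens $U \subset X$ and $V \subset Y$ whose complements have codimension at least $c$.  The strategy is to run Gabriel's reconstruction inside the quotient category, using the fact that $\C^c(X)$ retains only the codimension-$\leq c$ skeleton of $X$.  Explicitly: identify the points of $X$ of codimension $\leq c$ with a distinguished class of objects of $\C^c(X)$ (for instance, images of residue sheaves $\O_{\overline{\{x\}}}$, singled out as simple-like objects with an appropriate endomorphism field); reconstruct their Zariski topology from supports and the lattice of Serre subcategories of $\C^c(X)$; and finally reconstruct the structure sheaf on an open $U \subset X$ containing precisely these points, via endomorphism rings of (images of) structure sheaves of small affine opens.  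The equivalence $\Phi$ then transports this geometric data to $Y$ and yields an isomorphism $U \simeq V$ with complements of the prescribed codimension.

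\medskip

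The main obstacle is this reconstruction step.  Gabriel's original argument has all of $\Coh(X)$ at its disposal, with every point and every local ring visible; in $\C^c(X)$ the sheaves of codimension $> c$ have been erased, so we can only hope to recover $X$ away from a subset of codimension at least $c$.  The crux is to show that, despite this loss of information, enough categorical structure survives to reconstruct local rings and gluing data along subvarieties of codimension exactly $c$.  This is precisely the step where the theorem interpolates between the extremes $c = 0$ (only the function field is recoverable) and $c = \dim X$ (Gabriel), and where the cited results of Meinhardt and Partsch presumably supply the decisive tools; I would expect the full proof to be obtained by extending their constructions uniformly in $c$, possibly with an induction on the codimension strata together with a compatibility check that the reconstructed opens glue correctly as $c$ varies.
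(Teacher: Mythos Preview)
Your overall architecture matches the paper's: the easy direction is exactly the restriction-to-an-open lemma you state, and the hard direction proceeds by reconstructing from $\C_k(X)$ a locally ringed space built out of minimal objects (the paper calls it $S(X,k,L)$, isomorphic to $X_{\geq k}$ with structure sheaf $i^{-1}\O_X$), then promoting an isomorphism of such spaces to an isomorphism of honest open subschemes.

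There are, however, two genuine gaps in your sketch. First, you propose to recover the structure sheaf from endomorphisms of the image of $\O_X$ (or of structure sheaves of affines), but an arbitrary equivalence $\Phi$ need not send $\O_X$ to $\O_Y$, and a priori $\Phi(\O_X)$ could be any coherent sheaf. The paper isolates a categorical characterization --- $\Hom_{\C_k}(\O_X,P)$ is one-dimensional over $\End(P)$ for every minimal $P$, plus a local maximality condition --- and uses it together with Nakayama and semicontinuity of fibre rank to prove that $\Phi(\O_X)$ is a line bundle on an open containing $Y_{\geq k}$. Without this, your reconstructed ``structure sheaf'' on the $Y$ side has no reason to be $\O_Y$, and the reconstruction stalls.

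Second, even granting an isomorphism of locally ringed spaces $X_{\geq k}\simeq Y_{\geq k}$, you still need to upgrade it to an isomorphism of open subschemes $U\simeq V$ with $X_{\geq k}\subset U$ and $Y_{\geq k}\subset V$. This is a spreading-out argument: an isomorphism of local rings $\O_{X,p}\simeq\O_{Y,q}$ extends to an isomorphism of affine neighbourhoods (finite generation over $\k$ is essential here), and two such extensions agree on a possibly smaller open; one then covers by finitely many affines and shrinks. Your proposal gestures at ``gluing data'' but does not supply this step; deferring to Meinhardt--Partsch and hoping their arguments extend ``uniformly in $c$'' is not a proof. The paper carries this out directly (its Lemmas on spreading out local-ring isomorphisms and uniqueness), and no induction on codimension strata is needed.
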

This result actually holds true in broad generality.
For example we do not need to assume our schemes to be reduced or irreducible.
See Theorem \ref{suca} for the general result we prove.

As in Gabriel's theorem for $\Coh(X)$, we can also characterize the group of autoequivalences.
Let $\Aut_{\geq k}(X)$ denote the group of birational self-maps, which are defined away from a closed subset of dimension at most $k-1$.
Let $\Pic_{\geq k}(X)$ denote the group (under tensor products) of sheaves which are invertible away from a closed subset of dimension at most $k-1$ (see \ref{picardo}).
The group $\Aut_{\geq k}(X)$ acts on $\Pic_{\geq k}(X)$ by $(f,L) \mapsto (f^{-1})^* L$.
We may thus form the semi-direct product $\Aut_{\geq k}(X) \ltimes \Pic_{\geq k}(X)$.
\begin{thrm*}
	Let $X$ be a variety.
	There is an isomorphism between $\Aut(\C_k(X))$, the group of autoequivalences, and $\Aut_{\geq k}(X) \ltimes \Pic_{\geq k}(X)$.
\end{thrm*}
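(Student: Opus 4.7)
The plan is to construct mutually inverse homomorphisms between the two groups. In one direction, define
\[
\Psi \colon \Aut_{\geq k}(X) \ltimes \Pic_{\geq k}(X) \longrightarrow \Aut(\C_k(X)), \qquad (f, L) \mapsto \bigl[\F \mapsto f_* \F \otimes L\bigr].
\]
The enabling observation is that whenever $U \subset X$ is open with $\dim(X \setminus U) \leq k-1$, restriction along $j \colon U \into X$ descends to an equivalence $\C_k(X) \simeq \C_k(U)$, since the kernel and cokernel of the unit $\F \to j_* j^* \F$ are supported on $X \setminus U$ and thus vanish in the quotient. Consequently, after shrinking to a common open $U$ on which $f$ is a genuine isomorphism and $L$ is a genuine line bundle, the pushforward-tensor functor acts on $\Coh(U)$ in the classical sense and descends unambiguously to $\C_k(U) \simeq \C_k(X)$. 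Checking that $\Psi$ is independent of the chosen $U$ and respects the semi-direct product law (which uses precisely the formula $f_* L = (f^{-1})^* L$ for isomorphisms) is a direct calculation patterned on Gabriel's original argument.

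For injectivity, suppose $\Psi(f, L) \cong \Id_{\C_k(X)}$. Evaluating the isomorphism at the object $\O_X$ and using that $f_* \O_X \cong \O_X$ on a big open (where $f$ is a genuine isomorphism) gives $L \cong \O_X$ in $\C_k(X)$, so $L$ is trivial in $\Pic_{\geq k}(X)$. The residual equivalence $f_* \cong \Id$ on $\C_k(X)$ then forces $f = \Id$ in $\Aut_{\geq k}(X)$ by the injectivity built into the main theorem: the identity autoequivalence can only be induced by the identity birational self-map.

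Surjectivity is where the main work lies and is the key obstacle. Given $\Phi \in \Aut(\C_k(X))$, the main theorem (with $Y = X$) already produces an element $f \in \Aut_{\geq k}(X)$; concretely, I would extract $f$ by tracking the action of $\Phi$ on the simple objects of $\C_k(X)$, which correspond to generic stalks at the codim-$c$ integral subschemes of $X$. After composing with $\Psi(f, \O_X)^{-1}$, one may assume $\Phi$ fixes each such simple object up to isomorphism. The scalars by which $\Phi$ then acts on the endomorphism rings of these simples, together with the way $\Phi$ twists extensions between them, furnish transition data that, on a sufficiently fine cover of a big open of $X$, should assemble into a class in $\Pic_{\geq k}(X)$ mapping to $\Phi$. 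The delicate step is exactly this patching: ensuring the cocycle extracted from $\Phi$ defines a line bundle on a single open whose complement has dimension at most $k-1$, rather than on a cover that must be shrunk indefinitely to pass from local to global compatibility.
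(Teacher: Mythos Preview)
Your overall shape---construct $\Psi$, then argue injectivity and surjectivity---matches the paper, but your surjectivity argument has a real gap and also misses the key simplification. You propose to extract the line bundle $L$ from an autoequivalence $\Phi$ (after peeling off $f$) by reading off ``scalars on endomorphism rings of simples'' and ``twists of extensions,'' then patching a cocycle. None of this is necessary: the candidate is simply $L = \Phi(\O_X)$. Proposition~\ref{equivlrs} already shows $\Phi(\O_X)$ is invertible on an open containing $X_{\geq k}$, so it represents a class in $\Pic_{\geq k}(X)$ with no cocycle-assembly required. The paper organizes this as a backward map $\Phi \mapsto (f,\Phi(\O_X))$ and checks directly that it splits $\Psi$, so injectivity of $\Psi$ comes for free.

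The substantive step you do not address is the one the paper spends its effort on: after reducing to an autoequivalence $\Phi$ that induces the identity on $X_{\geq k}$ and satisfies $\Phi(\O_X)\equiv\O_X$, one must exhibit a natural isomorphism $\tau\colon \Phi \Rightarrow \Id_{\C_k(X)}$. Agreement on simple objects, or even on all objects up to non-natural isomorphism, is not enough. The paper builds $\tau_M$ by using the bijection $\Hom_{\C_k}(\O_X,M)\cong\Hom_{\C_k}(\O_X,\Phi(M))$ to transport generators of a representative $M'$ of $\Phi(M)$ to sections of $M$ on a big open, checks via Lemma~\ref{cplocale} that the resulting map is an isomorphism at every stalk of dimension $\geq k$, and then invokes Lemmas~\ref{eximodul} and~\ref{uniqmodul} to see it is well defined and functorial in $\C_k$. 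Your proposal stops before this part; without it you have at best produced a candidate $(f,L)$, not shown $\Psi(f,L)\cong\Phi$.
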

Once again, the theorem holds in broad generality.
See Theorem \ref{autoteorema}.

\subsection*{Previous work}
Gabriel originally proved his reconstruction theorem in \cite{gabriel}.
This was later generalized considerably by various people \cite{rosenberg, brand, jm}.
In \cite{mp}, Meinhardt and Partsch were interested in constructing stability conditions on the (derived categories of the) quotient categories $\C^c(X)$.
Along the way they showed the $c=0$ and $c=1$ cases of our main result, when $X,Y$ are smooth and projective over an algebraically closed field.

\subsection*{Acknowledgements}
We would like to thank David Rydh for reading a first draft, and providing excellent comments and corrections.
In particular, we thank him for pointing out a mistake in the proof of lemma \ref{nosheaf} and for noticing that the hypotheses of our main theorem could be relaxed (see Remark \ref{Rydh}).
We thank Pieter Belmans, Zinovy Reichstein, and Angelo Vistoli for useful comments.
Finally, JC wishes to give extra-warm thanks to Richard ``TDR'' Shadrach for many many conversations throughout the years.

\subsection*{Conventions}
For the entirety of this paper we fix a base (commutative) noetherian ring $\k$.
All algebras, schemes, categories, morphisms, and functors are implicitly assumed to be over $\k$.
Starting from Section \ref{gabrielsection}, all schemes will be assumed to be of finite type over $\k$.
If $X$ is a scheme, $U \subset X$ is open, and $F$ is a sheaf on $X$, we will write $F_U$ for the restriction (i.e.\ pullback) of $F$ to $U$.
If $R$ is a ring, we will write $\mod(R)$ for the category of finitely generated $R$-modules.

\setcounter{tocdepth}{1}
\tableofcontents
\section{Quotient categories}
We begin by briefly reviewing some standard notions and later introduce some notation.
We refer to \cite{gabriel,faith} for a thorough treatment of quotients of abelian categories.
Let $\cat A$ be an abelian category.
A subcategory $\cat S \subset \cat A$ is \emph{Serre} if, given an exact sequence $A \to B \to C$, then $B \in \cat S$ if and only if $A,C \in \cat S$.
Given a Serre subcategory $\cat S \subset \cat A$, we may form the quotient $\Phi\colon \cat A \to \cat A / \cat S$.
This category is universal among all abelian categories $\cat B$ equipped with an exact functor $\Psi\colon \cat A \to \cat B$ such that $\Psi(S) = 0$ for all $S \in \cat S$.
Recall that the \emph{kernel} of a functor $\Phi$ is the full subcategory whose objects satisfy $\Phi(M) = 0$.
Thus $\cat S$ is precisely the kernel of $\cat A \to \cat A/ \cat S$.

\begin{lem}\label{iso}
	Let $\cat S_1 \subset \cat S_2 \subset \cat A$ be two Serre subcategories of the abelian category $\cat A$.
	Then $\cat S_2 / \cat S_1$ is a Serre subcategory of $\cat A/ \cat S_1$ and the quotient $\cat A / \cat S_2$ is naturally equivalent to the iterated quotient $\left( \cat A / \cat S_1 \right) / \left( \cat S_2 / \cat S_1 \right)$.
\end{lem}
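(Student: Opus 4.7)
The plan is to use the universal property of Serre quotients throughout, with one preliminary lifting argument to handle the Serre axiom. Fix notation: let $\Phi \colon \cat A \to \cat A/\cat S_1$ be the quotient, and define $\cat S_2/\cat S_1$ as the full subcategory of $\cat A/\cat S_1$ spanned by the essential image $\Phi(\cat S_2)$.

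First I would verify that $\cat S_2/\cat S_1$ is Serre in $\cat A/\cat S_1$. Given an exact sequence $A \to B \to C$ in $\cat A/\cat S_1$, the idea is to lift it, up to $\cat S_1$-modifications, to an exact sequence in $\cat A$. This rests on the classical representation of morphisms in $\cat A/\cat S_1$ by roofs, together with the fact that short exact sequences in the quotient arise from short exact sequences in $\cat A$ modulo objects in $\cat S_1$. Once one has such a lift, the Serre condition for $\cat S_2/\cat S_1$ reduces directly to the Serre condition for $\cat S_2$ in $\cat A$, using that $\cat S_1 \subset \cat S_2$.

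Next I would construct the equivalence $\cat A/\cat S_2 \simeq (\cat A/\cat S_1)/(\cat S_2/\cat S_1)$ by invoking the universal property twice. On one side, the composite $\cat A \to \cat A/\cat S_1 \to (\cat A/\cat S_1)/(\cat S_2/\cat S_1)$ is exact and kills every $S \in \cat S_2$, hence factors uniquely through an exact functor $F \colon \cat A/\cat S_2 \to (\cat A/\cat S_1)/(\cat S_2/\cat S_1)$. On the other side, the quotient $\cat A \to \cat A/\cat S_2$ kills $\cat S_1$ and so descends to an exact functor $\cat A/\cat S_1 \to \cat A/\cat S_2$; this in turn kills $\cat S_2/\cat S_1$ and hence descends further to $G \colon (\cat A/\cat S_1)/(\cat S_2/\cat S_1) \to \cat A/\cat S_2$. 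The compositions $FG$ and $GF$ agree with the identity after pre-composing with the universal functors out of $\cat A$, and uniqueness in each universal property then forces them to be naturally isomorphic to the respective identity functors.

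The main obstacle is the lifting step in the verification that $\cat S_2/\cat S_1$ is Serre: although classical, it requires unpacking the explicit construction of $\cat A/\cat S_1$ via a calculus of fractions. Everything after that is a formal consequence of the universal property of Serre quotients.
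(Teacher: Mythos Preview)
Your proposal is correct and follows essentially the same approach as the paper: the paper's proof simply states that the first assertion ``follows from the definitions'' and the second ``is a consequence of the universal property,'' which is exactly what you spell out in more detail. Your lifting argument for the Serre condition is the content hidden behind the paper's phrase ``from the definitions'' (and is indeed the content of Lemma~\ref{exact} later in the paper), and your two-sided use of the universal property is precisely what the paper gestures at.
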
 
\begin{proof}
	The first assertion follows from the definitions, while the second is a consequence of the universal property.
\end{proof}
	Let $\cat A / \cat S$ be a quotient category.
	Let $P, Q \in \cat A$.
	To distinguish arrows in $\cat A$ from arrows in $\cat A / \cat S$, we will write $P \to Q$ for the former and $P \qto Q$ for the latter.
	If two objects $P,Q \in \cat A$ become isomorphic in $\cat A/ \cat S$, we will write $P \equiv Q$.
	Finally, a morphism $f\colon P \to Q$ such that $\ker f, \coker f \in \cat S$ will be called a \emph{weak equivalence}.
	
The quotient category $\cat A / \cat S$ may be concretely built as follows \cite[Exercise 8.12]{ks}.
The objects of $\cat A / \cat S$ are the same as the objects of $\cat A$.
A morphism $P \qto Q$ is an equivalence class of ``roofs'', i.e. diagrams
\begin{center}
\begin{tikzcd}
	& U \ar{dl}\ar{dr} & \\
	P && Q 
\end{tikzcd}
\end{center}
with $U \to P$ a weak equivalence.
%

\begin{lem}\label{exact}
	Let $\cat A / \cat S$ be a quotient category.
	\begin{itemize}
		\item $P \equiv 0$ if and only if $P \in \cat S$.
		\item If $0 \qto A \qto B \qto C \qto 0$ is short exact in $\cat A / \cat S$, then there exists objects $A',B',C'$ and isomorphisms (in $\cat A/ \cat S$) $A \qto A'$, $B \qto B'$, $C \qto C'$ and a short exact sequence $0 \to A' \to B' \to C' \to 0$ in $\cat A$, such that the diagram
		\begin{center}
			\begin{tikzcd}
				0 \ar{r} & A' \ar{r}\ar[rightharpoonup]{d} & B' \ar{r}\ar[rightharpoonup]{d} & C' \ar{r}\ar[rightharpoonup]{d} & 0 \\
				0 \ar{r} & A \ar[rightharpoonup]{r} & B \ar[rightharpoonup]{r} & C \ar{r} & 0 \\
			\end{tikzcd}
		\end{center}
		commutes in $\cat A / \cat S$.
	\end{itemize} 
\end{lem}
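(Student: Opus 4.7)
The first bullet is immediate from the universal property. The author has already noted that $\cat S$ is the kernel of $\Phi \colon \cat A \to \cat A/\cat S$, i.e.\ the subcategory of objects sent to zero. An object is isomorphic to $0$ in $\cat A/\cat S$ precisely when $\Phi$ sends it to a zero object, so $P \equiv 0$ if and only if $P \in \cat S$.

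For the second bullet, my plan is to lift the short exact sequence to a genuine short exact sequence in $\cat A$ by using roofs to realize $g\colon B \qto C$ as an honest morphism of $\cat A$. Concretely, I would represent $g$ by a roof $B \stackrel{t}{\from} B' \stackrel{\beta}{\to} C$ with $t$ a weak equivalence, so that $B' \equiv B$ in the quotient. A short computation with roofs (using the identity-on-the-left roof for $B' \qto B$ induced by $t$, then composing with $g$) shows that the composite $B' \qto B \stackrel{g}{\qto} C$ is represented by $\beta$ itself. Inside $\cat A$, I then set $C' = \text{image}(\beta)$ and $A' = \ker\beta$, producing the short exact sequence
\[
 0 \to A' \to B' \to C' \to 0
\]
in $\cat A$.

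It remains to produce the vertical isomorphisms $A' \equiv A$, $B' \equiv B$, $C' \equiv C$ making the diagram of the statement commute. The middle one is built in. For $C' \equiv C$, note that $g$ is an epimorphism in $\cat A/\cat S$, hence so is $\Phi(\beta)$; by exactness of $\Phi$ this means $\Phi(\coker\beta) = 0$, and the first bullet then forces $\coker\beta \in \cat S$, so $C' \into C$ is a weak equivalence. For $A' \equiv A$, observe that the composite $A' \to B' \to C$ is zero in $\cat A$ (it factors through $A' \to C'$, which vanishes), so after passing to $\cat A/\cat S$ the composite $A' \qto B' \qto B \stackrel{g}{\qto} C$ is zero; since $A$ is the kernel of $g$ in $\cat A/\cat S$, this map factors through a canonical morphism $A' \qto A$ completing the required ladder. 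Applying the five lemma in the abelian category $\cat A/\cat S$ then yields $A' \equiv A$.

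The only mildly technical step is identifying the composite $B' \qto B \qto C$ with the morphism induced by $\beta$, which is just careful bookkeeping with the roof calculus; beyond that the argument is entirely formal, relying on the exactness of $\Phi$ together with the first bullet.
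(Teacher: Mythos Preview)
Your argument is correct. The first bullet is exactly as you say, and for the second bullet your lift via a roof for $g$, followed by taking image and kernel in $\cat A$ and invoking the five lemma in $\cat A/\cat S$, goes through cleanly; the identification of $\Phi(\beta)$ with the composite $B'\qto B\qto C$ is the standard roof computation you indicate.

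The paper, however, does not argue this at all: it simply cites \cite[Prop.~7.1.20(ii)]{ks} for the first bullet and \cite[Cor.~15.8]{faith} for the second. So your approach is a genuine, self-contained proof where the paper delegates to the literature. What you gain is independence from those references and a transparent argument using only the roof description and exactness of $\Phi$ already set up in the paper; what the paper gains is brevity, treating the lemma as a known fact about Serre quotients. Your proof is essentially the standard one (and presumably close to what is in Faith), so there is no mathematical tension between the two.
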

\begin{proof}
	The first claim follows from \cite[Prop 7.1.20 (ii)]{ks}, the second is \cite[Cor 15.8]{faith}.
\end{proof}

\subsection{Dimension}\label{subdimension}
Let $\cat A$ be an abelian category.
We say an object $P$ is \emph{minimal} if it has no non-trivial sub-objects.\footnote{What we call minimal objects are often called \emph{simple} objects. However, by a \emph{simple sheaf} one typically means one that has only scalar endomorphisms.}
Note that for us the zero object is also minimal (this is slightly non-standard).
We let $\cat S_0$ be the smallest Serre subcategory containing all minimal objects of $\cat A$.
Let $\cat A_1 = \cat A / \cat S_0$.
Since $\cat A_1$ is also an abelian category, we may repeat the process.
Let $\cat S'_1$ be the smallest Serre subcategory containing all minimal objects of $\cat A_1$ and let $\cat A_2 = \cat A_1 / \cat S'_1$.
We define $\cat S_1$ to be the kernel of $\cat A \to \cat A_1 \to \cat A_2$.
By iterating we obtain a sequence of quotients
\begin{align}\label{quotients_of_cats}
	\cat A = \cat A_0 \onto \cat A_1 \onto \cat A_2 \onto \cat A_3 \onto \cdots \onto \{0\}
\end{align}
and, by taking kernels, we find a nested family of Serre subcategories
\begin{align}\label{kers}
	\{0\} \subset \cat S_0 \subset \cat S_1 \subset \cat S_2 \subset \cat S_3  \subset \cdots \subset \cat A
\end{align}
Of course, it need not be the case that there is a $k$ such that $\cat A_k = \{0\}$ or that $\cat S_k = \cat A$.
\begin{defn}
	The \emph{Krull dimension} of $\cat A$ is
	\begin{align*}
		\dim \cat A \coloneq \inf \{ k \mid \cat A_{k+1} = 0 \} = \inf \{ k \mid \cat S_k = \cat A \}.
	\end{align*}
	We also define
	\begin{align*}
		\dim M \coloneq \inf \{ k \mid M = 0 \text{ in } \cat A_{k+1} \} = \inf \{ k \mid M \in \cat S_k \}
	\end{align*}
	for any object $M \in \cat A$.
\end{defn}
These definitions are of course justified by the algebro-geometric context.

\subsection{The geometric case}
In this subsection, $X$ denotes an arbitrary noetherian scheme.
Let $\cat C = \Coh(X)$ be its category of coherent sheaves.
As before, we write $\cat S_k = \cat S_k(X)$ for the sequence of kernels as in \eqref{kers}.
Denote by $\Coh_{\leq k}(X)$ the category of sheaves supported in dimension at most $k$.
\begin{prop}\label{geo}
	We have $\cat S_k(X) = \Coh_{\leq k}(X)$.
\end{prop}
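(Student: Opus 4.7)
The plan is to induct on $k$, using Lemma~\ref{iso} at each step to realize $\cat A_k$ as $\Coh(X)/\Coh_{\leq k-1}(X)$ (once the induction hypothesis is in place) and to reduce the problem to identifying the minimal objects of this quotient.

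For the base case $k = 0$, I would classify the minimal objects of $\Coh(X)$ directly: a Nakayama-type argument forces any simple coherent sheaf to be annihilated by some maximal ideal, and simplicity then pins things down to a one-dimensional vector space over a residue field, so the minimal objects are precisely the skyscraper sheaves $\kappa(x)$ at closed points. The Serre subcategory they generate is $\Coh_{\leq 0}(X)$, since every zero-dimensional coherent sheaf admits a finite composition series with skyscraper subquotients.

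For the inductive step, assuming $\cat S_{k-1}(X) = \Coh_{\leq k-1}(X)$, Lemma~\ref{iso} together with the correspondence between Serre subcategories of $\cat A_k$ and Serre subcategories of $\Coh(X)$ containing $\Coh_{\leq k-1}(X)$ reduces the task to showing that the Serre subcategory $\cat S'_k \subset \cat A_k$ generated by minimal objects equals $\Coh_{\leq k}(X)/\Coh_{\leq k-1}(X)$. The ``easy'' direction is the inclusion $\Coh_{\leq k}(X)/\Coh_{\leq k-1}(X) \subseteq \cat S'_k$. First I would check that for every integral closed subscheme $Z \subset X$ of dimension $k$, the image of $(i_Z)_*\O_Z$ in $\cat A_k$ is minimal: any nonzero coherent subsheaf corresponds to a nonzero ideal on integral $Z$, hence has the same support, while its quotient has strictly smaller dimension and therefore vanishes in $\cat A_k$. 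A standard dévissage filtration would then express any $F \in \Coh_{\leq k}(X)$ as a finite iterated extension of sheaves of the form $(i_{Z_i})_*\mathcal I_i$ with $Z_i$ integral of dimension $\leq k$; those with $\dim Z_i < k$ vanish in $\cat A_k$, and those with $\dim Z_i = k$ become isomorphic there to the minimal objects $(i_{Z_i})_*\O_{Z_i}$ by the same integrality argument applied to $\O_{Z_i}/\mathcal I_i$.

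The reverse containment is where I expect the main work. It suffices to show that no coherent sheaf $F$ with $\dim F > k$ is minimal in $\cat A_k$. Assume for contradiction that such an $F$ is minimal, and pick an associated point $\eta$ of $F$ with $\dim \overline{\{\eta\}} > k$ (this exists because $\supp F$ has dimension $> k$). A dévissage filtration starting with the associated prime at $\eta$ gives a coherent subsheaf $\mathcal G \subseteq F$ of the form $(i_Z)_*\mathcal K$, where $Z = \overline{\{\eta\}}$ and $\mathcal K$ is a nonzero coherent ideal on $Z$. Since $\dim \mathcal G = \dim Z > k$, $\mathcal G$ is nonzero in $\cat A_k$, and minimality of $F$ forces $\mathcal G \equiv F$. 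Now, because $\dim Z > k \geq 0$, Krull's Hauptidealsatz supplies an integral closed subscheme $D \subset Z$ of dimension $\dim Z - 1 \geq k$, which (after a small choice ensuring $D \not\subset V(\mathcal K)$) gives a strict inclusion $\mathcal I_D \cdot \mathcal K \subsetneq \mathcal K$ whose cokernel has dimension $\dim D \geq k$. This produces a proper nonzero subobject of $\mathcal G \equiv F$ in $\cat A_k$, contradicting minimality. The careful construction of $D$ so that the cokernel does not vanish is the main technical point.
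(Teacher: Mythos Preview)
Your proposal is correct and follows the same inductive architecture as the paper, which packages the two key steps as Lemmas~\ref{min} and~\ref{small}. The difference lies in how you handle the ``hard'' half of the classification of minimal objects. The paper, after reducing to $F$ supported on an integral $Z$, uses the colimit description of $\Hom$ in the quotient (Lemma~\ref{hom}) to manufacture a nonzero map $\O_Z \qto F$ and then invokes minimality of $\O_Z$ to conclude $F \equiv \O_Z$. You instead exhibit a proper nonzero subobject of any $F$ with $\dim F > k$ directly, via the twist $\mathcal I_D \cdot \mathcal K \subsetneq \mathcal K$. Both routes work; the paper's is slightly slicker because Lemma~\ref{hom} is needed elsewhere anyway, while yours avoids that lemma at the cost of a more hands-on filtration argument.

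Two small points to tighten. First, when you argue that $(i_Z)_*\O_Z$ is minimal in $\cat A_k$, a subobject in the quotient category is not literally a coherent subsheaf of $\O_Z$: by Lemma~\ref{exact} it is a subsheaf of some $B$ with $B \equiv \O_Z$, and one must pass to the stalk at the generic point of $Z$ (as the paper does) rather than speak of ``ideals on $Z$'' directly. Second, Krull's Hauptidealsatz yields a prime of height one, which in a general noetherian scheme need not have dimension exactly $\dim Z - 1$. Fortunately you only need an integral $D \subsetneq Z$ with $\dim D \geq k$, and this follows immediately from $\dim Z \geq k+1$ and the definition of Krull dimension (take the $k$-th term of a maximal chain). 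With $D$ so chosen, $\mathcal K/\mathcal I_D\mathcal K$ is nonzero at the generic point of $D$ by Nakayama, since $\mathcal K$ has full support on $Z$; no extra ``small choice'' is needed.
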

Before we prove this result, we need an alternative way to compute morphisms in our quotients.
We define $\cat C_0 = \cat C$ and $\cat C_k = \cat C / \cat S_{k-1}$ for all other $k$.

Let $\Sigma$ be a \emph{family of supports}, i.e.\ a collection of closed subsets of $X$ such that
\begin{itemize}
	\item $\emptyset \in \Sigma$,
	\item if $Z \in \Sigma$, $V \subset Z$ is closed, then $V \in \Sigma$,
	\item if $Z_1, Z_2 \in \Sigma$ then $Z_1 \cup Z_2 \in \Sigma$.
\end{itemize}
For example, $\Sigma$ might be the collection of all closed subsets of $X$ of dimension at most $k$.
Write $\Coh_\Sigma(X) \subset \Coh(X)$ for the subcategory of sheaves $F$ such that $\supp F \in \Sigma$.
Because of our assumptions, $\Coh_\Sigma(X)$ is a Serre subcategory.
We write $\cat Q = \Coh(X)/\Coh_\Sigma(X)$ for the quotient.
\begin{lem}\label{hom}
	Let $F,G \in \Coh(X)$.
	Then
	\begin{align*}
		\Hom_{\cat Q}(F,G) = \varinjlim_{
			\substack{\emptyset \subset U \subset X \\
				 X \setminus U \in \Sigma}}
			\Hom_U(F_U,G_U)
	\end{align*}
	where $F_U, G_U$ denote the restrictions to $U$.
\end{lem}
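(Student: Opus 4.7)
The plan is to construct mutually inverse maps between $\Hom_{\cat Q}(F,G)$ and the colimit using the roof description recalled just before the lemma. Note first that since $\Sigma$ is closed under finite unions, the open subsets $U$ with $X \setminus U \in \Sigma$ form a directed system under reverse inclusion, so the colimit on the right-hand side makes sense.

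For the forward map, I would send a roof $F \xleftarrow{\alpha} F' \xrightarrow{\beta} G$ with $\alpha$ a weak equivalence to $\beta|_U \circ (\alpha|_U)^{-1}$, where $U = X \setminus (\supp \ker \alpha \cup \supp \coker \alpha)$: on this open, $\alpha$ restricts to an isomorphism of coherent sheaves, so the expression is well-defined. Roofs that are equivalent become equal after further restriction, so this descends to a well-defined element of the colimit.

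For the inverse, given $\varphi \colon F_U \to G_U$ with $X \setminus U \in \Sigma$, let $j \colon U \into X$ denote the inclusion and consider the map $\psi \colon F \oplus G \to j_* G_U$ sending $(f,g) \mapsto \varphi(f|_U) - g|_U$. Its kernel $K$ is quasi-coherent (the open embedding $j$ is quasi-compact since $X$ is noetherian), with $K|_U$ the graph of $\varphi$, which is canonically isomorphic to $F_U$ via the first projection. Since $X$ is noetherian, $K$ is the filtered union of its coherent subsheaves, and because $F_U$ is of finite type I can choose a coherent $F' \subseteq K$ with $F'|_U = K|_U$. The two projections yield a roof $F \from F' \to G$ whose first leg is an isomorphism on $U$, hence a weak equivalence, and which visibly recovers $\varphi$ on $U$.

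The main obstacle is injectivity of the forward map: if two roofs induce the same morphism over some common $U$, I must produce a third roof dominating both in order to identify their equivalence classes. This is a standard calculus-of-fractions argument — taking an appropriate fiber product of the middle terms and cutting down to a coherent subsheaf, again using noetherianity — but it is the one place where the equivalence relation on roofs in \cite[Exercise 8.12]{ks} has to be matched up by hand with the (rather coarser-looking) equivalence defining the colimit. Surjectivity and the identity $\Phi \circ \Psi = \Id$ on the colimit side are both immediate from the construction.
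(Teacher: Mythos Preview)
Your proposal is correct and takes essentially the same approach as the paper: both use the roof description, sending a roof $F \xleftarrow{\alpha} F' \xrightarrow{\beta} G$ to $\beta|_U \circ (\alpha|_U)^{-1}$ on the open where $\alpha$ becomes invertible, and conversely extending the graph $\Gamma_\varphi \subset F_U \oplus G_U$ to a coherent subsheaf of $F \oplus G$ to produce a roof. The only cosmetic difference is that the paper simply picks any coherent $E \subset F \oplus G$ with $E_U = \Gamma_\varphi$ (and handles equivalence of roofs via the intersection $E \cap E'$), whereas you realize this extension explicitly as a coherent approximation to $\ker(F \oplus G \to j_*G_U)$ and phrase the dominating roof as a fiber product.
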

\begin{proof}
We will define mutually inverse maps, following \cite[Lemma 3.6]{mp}. 
We start by constructing a map from right to left.
Any element of the right hand side may be represented as a pair $(U,f)$, where $X \setminus U \in \Sigma$ and $f\colon F_U \to G_U$.
Write $\Gamma_f \subset F_U \oplus G_U$ for the graph of $f$.
Let $E \subset F \oplus G$ be any coherent sheaf such that $E_U = \Gamma_f$.
Then $ E \xrightarrow{\mathrm{pr}_1} F$ is a weak equivalence in $\cat Q$, so the roof $F \xleftarrow{\mathrm{pr}_1} E \xrightarrow{\mathrm{pr}_2} G$ represents an element $f \in \Hom_{\cat Q}(F,G)$.

Suppose now $f'\colon F_{U'} \to G_{U'}$ represents the same element as $(U,f)$.
As above, we construct a roof $F \xleftarrow{\mathrm{pr}'_1} E' \xrightarrow{\mathrm{pr}'_2} G$.
Since $(U,f)$ an $(U',f')$ must eventually agree, there exists an open subset $U''$ with $f_{U''}=f'_{U''}$ and such that $E''=E \cap E'$ is equivalent to both $E$ and $E'$ through the inclusion map.
The commutative diagram
\[ \xymatrix @C=0.5cm @R=0.5cm { & & E'' \ar[dl]_i \ar[dr]^{j} & & \\ & E \ar[dl]_{\mathrm{pr}_1} \ar[drrr]_{\mathrm{pr}_2} & & E' \ar[dlll]^{\mathrm{pr}'_1} \ar[dr]^{\mathrm{pr}'_2} & \\ F & & & & G } \]
shows that the two roofs are equivalent precisely as in \cite[Lemma 3.6]{mp}.

Consider now a morphism $f \in \Hom_{\cat Q}(F,G)$, represented by the roof $F \xleftarrow{s} E \xrightarrow{t} G$.
Since $s$ is a weak equivalence (by definition of roof), we have $\mathrm{supp}(\ker(s)) \cup \mathrm{supp}(\coker(s)) \in \Sigma$.
Let $U$ be its complement.
Once we restrict to $U$, $s$ becomes an isomorphism, hence the map $t \circ s^{-1}\colon F_U \rightarrow G_U$ makes sense.
To make sure the function $f\mapsto t \circ s^{-1}$ is well defined, we use that two different roofs representing the same morphism must be equivalent.
Indeed, up to restricting to a smaller subset, we will obtain the same map.
Finally, since we are allowed to choose ad hoc representatives, one checks the two maps just defined compose to the respective identities.
\end{proof}

\begin{lem}\label{min}
	An object $P \in \Coh(X)/\Coh_{\leq k-1}(X)$ is minimal if and only if either $P \equiv 0$ or $P \equiv \O_Z$ for $Z \into X$ an integral closed subscheme of dimension $k$.
\end{lem}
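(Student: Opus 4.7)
The backward direction, that $P \equiv \O_Z$ for $Z$ integral of dimension $k$ (and the trivial case $P \equiv 0$) implies $P$ is minimal in $\cat Q \coloneq \Coh(X)/\Coh_{\leq k-1}(X)$, is the simpler direction. Given a monomorphism $A \qto P$ in $\cat Q$, I would use Lemma \ref{exact} to realize it as an honest inclusion $A' \hookrightarrow B'$ in $\Coh(X)$ with $B' \equiv P \equiv \O_Z$. An isomorphism in $\cat Q$ is a roof of weak equivalences, and weak equivalences induce isomorphisms on stalks at points whose closure has dimension $\geq k$; hence $B'_\eta \cong \kappa(\eta)$ at the generic point $\eta$ of $Z$, so $A'_\eta$ is either $0$ or all of $B'_\eta$. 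Combined with the fact that $\supp B'$ agrees with $Z$ outside a closed set of dimension $\leq k-1$, this forces either $A' \in \Coh_{\leq k-1}(X)$ or $B'/A' \in \Coh_{\leq k-1}(X)$, giving $A \equiv 0$ or $A \equiv P$.

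For the forward direction, assume $P$ is minimal and $P \not\equiv 0$. The plan is a sequence of reductions. First, replace $P$ by its quotient by the maximal subsheaf lying in $\Coh_{\leq k-1}(X)$ (which preserves the class of $P$ in $\cat Q$), so that every associated point of $P$ has closure of dimension $\geq k$. If $\supp P$ had two distinct irreducible components $Z_1, Z_2$ of dimension $\geq k$, with generic points $\eta_1, \eta_2$, then $\Gamma_{Z_1}(P)$ would be a subsheaf which is neither $\equiv 0$ (its support contains $\eta_1$) nor $\equiv P$ (its cokernel retains $\eta_2$), contradicting minimality. So $\supp P = Z$ is irreducible, of some dimension $d \geq k$.

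To force $d = k$: if $d > k$, pick on an affine neighborhood of $\eta$ a function $f$ not vanishing at $\eta$; by Krull's principal ideal theorem $V(f) \cap Z$ has dimension $d-1 \geq k$, and a suitable coherent extension of $f \cdot P$ yields a strict non-trivial subobject, contradicting minimality. Once $d = k$, the filtration $P \supset \mathcal{I}_Z P \supset \mathcal{I}_Z^2 P \supset \cdots$ is eventually zero, and minimality forces some graded piece $\mathcal{I}_Z^i P / \mathcal{I}_Z^{i+1} P$ to be $\equiv P$, so I may replace $P$ by this $\O_Z$-module. A similar minimality argument, using a rank-one subsheaf generated by a generic section, then rules out $\dim_{\kappa(\eta)} P_\eta \geq 2$, leaving $\dim_{\kappa(\eta)} P_\eta = 1$.

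Finally, with $P$ a coherent $\O_Z$-module of generic rank one, I construct the isomorphism $\O_Z \equiv P$ explicitly: pick a generator $s \in P_\eta$, extend to a section over some open $V \ni \eta$, and shrink $V$ via coherence so that $\O_Z|_V \to P|_V$ is an isomorphism. Extending by zero over $X \setminus Z$ yields an isomorphism on $U \coloneq V \cup (X \setminus Z)$ whose complement $Z \setminus V$ has dimension $< k$, and Lemma \ref{hom} packages this as the desired isomorphism in $\cat Q$. The main technical obstacle I anticipate is the repeated globalization of local constructions (the function $f$, the rank-one subsheaf) into genuine coherent subsheaves of $P$ on an arbitrary noetherian $X$, together with careful bookkeeping of supports to verify each step preserves or controls the class in $\cat Q$.
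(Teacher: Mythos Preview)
Your backward direction is essentially the paper's: both lift to an honest short exact sequence via Lemma~\ref{exact}, localize at the generic point $\eta$ of $Z$, and use that $B'_\eta \simeq \kappa(\eta)$ is a field to force one of the two outer terms to vanish in $\cat Q$.

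Your forward direction is sound in outline but takes a different and longer route. Where you successively carve out \emph{subobjects}---$\Gamma_{Z_1}(P)$ to force irreducibility, $f\cdot P$ together with Krull's Hauptidealsatz to force $d=k$, then the $I_Z$-adic filtration and a rank-one subsheaf to reduce the generic rank---the paper works entirely with \emph{quotients}. It observes that for any ideal sheaf $I$ of a closed subscheme of $\supp F$, the surjection $F \twoheadrightarrow F/IF$ is nonzero in $\cat Q$ whenever $V(I)$ meets $\supp F$ in a point of dimension $\geq k$ (Nakayama at that point), so by minimality $F \equiv F/IF$. Iterating with suitable $I$ reduces in one stroke to $Z$ integral of dimension exactly $k$. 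The paper then produces a single nonzero map $\O_Z \qto F$ from a nontorsion section on an affine open via Lemma~\ref{hom}; since both $\O_Z$ and $F$ are nonzero minimal, this map is automatically an isomorphism. This is shorter and avoids all the globalization bookkeeping you flag at the end.

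One genuine point of care in your argument: in the generality of the lemma ($X$ is an arbitrary noetherian scheme), height one does not translate into dimension $d-1$, so ``by Krull's principal ideal theorem $V(f)\cap Z$ has dimension $d-1$'' is not valid as stated. The repair is easy: since $\dim Z = d > k$ there is a chain of specializations from $\eta$, hence a point $y \in Z$, $y\neq\eta$, with $\dim\overline{\{y\}}\geq k$; choose $f$ vanishing at $y$ but not at $\eta$, and then $(P/fP)_y\neq 0$ by Nakayama while $(fP)_\eta = P_\eta \neq 0$, giving the required strict nontrivial subobject. The paper's quotient approach sidesteps this issue entirely.
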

\begin{proof}
	Let us write $\cat Q = \Coh(X)/\Coh_{\leq k-1}(X)$.
	We begin by showing that $\O_Z$ is minimal, for $Z \into X$ an integral closed subscheme.
		Suppose $\O_Z$ sits in the middle of a short exact sequence in $\cat Q$.
		By Lemma \ref{exact}, this means that, up to passing to an equivalent object $B \equiv \O_Z$, there is a short exact sequence in $\Coh(X)$
		\begin{align*}
			0 \to A \to B \to C \to 0.
		\end{align*}
	The goal is to show that either $C \equiv 0$ or that $B \to C$ is a weak equivalence.

		Let $\eta$ be the generic point of $Z$ and let $R = \O_{X,\eta}$ be the corresponding local ring, with maximal ideal $\m$.
		By taking germs, we get a short exact sequence 
		$$ 0 \to A_\eta \to B_\eta \to C_\eta \to 0 $$
		of $R$-modules.
		Since $B$ and $\O_Z$ are isomorphic up to $\Coh_{\leq k-1}(X)$, we have $B_\eta \simeq \O_{Z,\eta} = R/\m$.
		It follows that either $C_\eta = 0$ or $C_\eta = B_\eta$.

		If $C_\eta = 0$ then $\dim \supp C  < k$, which implies $C \equiv 0$.
		If $C_\eta = B_\eta$, then $\dim \supp A < k$, which implies $C \equiv B \equiv \O_Z$.

	To prove the other direction, let $F \in \Coh(X)$ represent a minimal element in $\cat Q$.
	Let $Z$ be the scheme-theoretic support of $F$, i.e.\ $Z = V(\Ann(F))$.
	If $\dim Z < k$, then $F \equiv 0$.
	If not, without loss of generality we may assume $Z$ is irreducible, reduced and of dimension $k$.
	This previous claim follows by minimality: we always have a surjection $F \onto F/IF$ for $I$ the ideal sheaf of a closed subscheme of $Z$.
	
	By the argument above, we already know $\O_Z$ is minimal in $\cat Q$.
	Our goal is to construct a non-zero map $\O_Z \qto F$, proving $\O_Z \equiv F$.
	For this, we use Lemma \ref{hom}.
	
	Write $i\colon Z \into X$ for the inclusion.
	We know $F = i_* F'$, where $F' = i^*F \in \Coh(Z)$.
	Let $U \subset Z$ be an affine open.
	We must have a non-torsion element $a \in F'_U$, otherwise the whole $F'$ would be torsion and $\dim Z < k$.
	View now $a$ as a non-zero map $\O_U \to F'$.

	Let $W = Z \setminus U$.
	Since $Z$ is irreducible, we must have $\dim W < k$.
	Let $\tilde U = X \setminus W$.
	We have
	\begin{align*}
		\Hom_{\tilde U}\left(\O_{\tilde U}, F_{\tilde U} \right)
		= \Hom_{\tilde U} \left( \O_{\tilde U}, i_{U*} \left( F'_U \right) \right)
		= \Hom_U \left( \O_U, F'_U \right).
	\end{align*}
	Since the map $\O_U \to F'_U$ is compatible with further restricting $\tilde U$, by Lemma \ref{hom} we have cooked up a non-zero map $\O_Z \qto F$. 
	The claim follows. 
\end{proof}

\begin{lem}\label{small}
	The subcategory $\Coh_{\leq k}(X) \subset \Coh(X)$ is the smallest Serre subcategory containing $\Coh_{\leq k-1}(X)$ and all the sheaves $\O_Z$, with $Z \into X$ integral closed subscheme and $\dim Z = k$.
\end{lem}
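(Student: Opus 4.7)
One containment is immediate: $\Coh_{\leq k}(X)$ is a Serre subcategory of $\Coh(X)$ that manifestly contains $\Coh_{\leq k-1}(X)$ together with every $\O_Z$ for $Z \into X$ integral of dimension $k$. It therefore contains the smallest Serre subcategory with those properties.

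For the reverse containment, let $\cat T$ be any Serre subcategory containing $\Coh_{\leq k-1}(X)$ and all such $\O_Z$, and let $F \in \Coh_{\leq k}(X)$. The plan is to produce a finite filtration
\begin{align*}
	0 = F_0 \subset F_1 \subset \cdots \subset F_n = F
\end{align*}
whose successive quotients $F_i/F_{i-1}$ are of the form $\O_{Z_i}$ for integral closed subschemes $Z_i \into X$, and to observe that necessarily $\dim Z_i \leq k$ for each $i$. Indeed, since $\cat T$ is Serre, if every subquotient $\O_{Z_i}$ belongs to $\cat T$, a straightforward induction on the length of the filtration shows $F \in \cat T$, completing the proof. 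For each $i$, we have $\dim Z_i \leq k$, so either $\dim Z_i = k$ and $\O_{Z_i} \in \cat T$ by hypothesis, or $\dim Z_i < k$ and $\O_{Z_i} \in \Coh_{\leq k-1}(X) \subset \cat T$.

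The key ingredient, and really the only substantive step, is the existence of the filtration. This is the classical prime filtration of a coherent sheaf on a noetherian scheme: locally on an affine $\Spec R$ it reduces to the statement that every finitely generated $R$-module $M$ admits a filtration whose successive quotients are of the form $R/\mathfrak{p}_i$ for primes $\mathfrak{p}_i$, and these glue since $R$ is noetherian and $F$ is coherent. The bound $\dim Z_i \leq k$ follows because each $F_i/F_{i-1}$ is a subquotient of $F$, so its support is contained in $\supp F$, which has dimension at most $k$ by assumption.

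The main potential obstacle is purely bookkeeping, namely making sure the filtration is finite and globally defined rather than just local; this is standard for noetherian schemes and we will simply cite it (or refer the reader to the appropriate result in \cite{ks}). No further geometric input beyond Lemma \ref{min} is needed.
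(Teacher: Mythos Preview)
Your approach via d\'evissage is more direct than the paper's, which works in the quotient $\cat Q = \Coh(X)/\Coh_{\leq k-1}(X)$ and proceeds in stages: first reduced $\O_Y$ via the injection $\O_Y \hookrightarrow \bigoplus \O_{Y_i}$, then sheaves annihilated by a reduced ideal via a surjection $\O_Y^{\oplus r} \twoheadrightarrow F$ constructed using Lemma~\ref{hom}, and finally arbitrary $F$ via the nilpotent filtration $I^aF/I^{a+1}F$. Your route avoids the quotient category and Lemma~\ref{hom} entirely, which is a genuine simplification.

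That said, there is a real gap in the filtration step. The global prime filtration on a noetherian scheme does \emph{not} in general have subquotients of the form $\O_{Z_i}$; local filtrations with quotients $R/\mathfrak p_i$ do not glue across an affine cover. The standard global statement (e.g.\ \cite[\href{https://stacks.math.columbia.edu/tag/01YF}{Tag 01YF}]{sp}) produces subquotients isomorphic to $(\iota_j)_*\mathcal I_j$ for a nonzero coherent ideal sheaf $\mathcal I_j \subset \O_{Z_j}$ on an integral closed $Z_j$. A concrete obstruction: take $F = \O_{\P^1}(-1)$ on $X=\P^1$ with $k=1$. Any filtration must have a rank-one torsion-free subquotient supported on all of $\P^1$, but $\O(-1)$ admits neither a nonzero map to nor from $\O_{\P^1}$, so no subquotient can be $\O_{\P^1}$ itself.

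The fix is immediate once you use the correct filtration: from $0 \to (\iota_j)_*\mathcal I_j \to \O_{Z_j} \to \O_{Z_j}/\mathcal I_j \to 0$, the right-hand term is supported on a proper closed subscheme of $Z_j$, hence lies in $\Coh_{\leq k-1}(X) \subset \cat T$; since $\O_{Z_j} \in \cat T$ by hypothesis and $\cat T$ is Serre, $(\iota_j)_*\mathcal I_j \in \cat T$ as well. With this adjustment your argument goes through and is cleaner than the paper's. (Minor note: you do not actually use Lemma~\ref{min} anywhere, so you can drop that remark.)
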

\begin{proof}
	It is easier to work in the quotient $\cat Q = \Coh(X)/\Coh_{\leq k-1}(X)$.
	The claim is equivalent to showing the following.
	Suppose $\cat S \subset \cat Q$ is a Serre subcategory containing all $\O_Z$, for $Z \into X$ an integral closed subscheme with $\dim Z = k$, and suppose $F \in \Coh_{\leq k}(X)$.
	Then $F \in \cat S$.
	
	If $F = \O_Z$, with $Z$ integral, then $F \in \cat S$ by assumption.
	If $F = \O_Y$, with $Y$ reduced but possibly reducible, let $Y_1,\ldots,Y_r$ be its irreducible components, equipped with the reduced scheme structure.
	The sheaves $\O_{Y_i}$ all belong to $\cat S$ by assumption.
	There is a map $\O_Y \to \O_{Y_1} \oplus \cdots \oplus \O_{Y_r}$.
	Since the intersection of all minimal primes in a reduced ring is zero, this map is injective.
	As $\cat S$ is a Serre category, it follows that $\O_Y \in \cat S$.
	
	Suppose now $F \in \Coh_{\leq k}(X)$ satisfies the following: there exists a reduced $Y$, with ideal sheaf $I$, $\dim Y \leq k$, and $IF = 0$.
	Let $i\colon Y \into X$ be the inclusion.
	By assumption there is an $F' \in \Coh(Y)$ such that $i_*F' = F$.
	Let now $W \subset Y$ be a closed subset, with $\dim W < k$, and such that $U = Y \setminus W$ is affine.
	Let $\tilde U = X \setminus W$.
	The restriction $F'_U$ is globally generated, hence there is a surjective map $\O_U^{\oplus r} \onto F'_U$.
	By pushing forward, there is a surjection $\O_{\tilde U}^{\oplus r} \onto F_{\tilde U}$.
	Using Lemma \ref{hom}, this induces a surjection $\O_Y^{\oplus r} \onto F$ in the category $\cat Q$.
	Since $\O_Y \in \cat S$, we see that $F \in \cat S$.
	
	Suppose now $F \in \Coh_{\leq k}(X)$ is arbitrary.
	Let $Y$ be its scheme-theoretic support.
	Let $I \subset \O_X$ be the ideal sheaf defining $Y_\text{red}$.
	By noetherianity, there exists an $m$ such that $I^{m+1}F = 0$.
	We have a string of short exact sequences
	\begin{align*}
				0 \to IF  \to F \to F / I F \to 0 \\
							0 \to I^2 F \to I F \to I F / I^2 F \to 0 \\
												\cdots \\
		0 \to I^m F \to I^{m-1} F \to I^{m-1} F / I^m F \to 0 
	\end{align*}
	Now, $I(I^a F / I^{a+1} F) =0$ for any $a$, hence they belong to $\cat S$.
	Using induction starting from $I(I^m F)=0$, and using as always that $\cat S$ is Serre, we see that $F \in \cat S$.
\end{proof}

\begin{proof}[Proof of Proposition \ref{geo}]
	We proceed by induction on $k$.
	By definition, $\cat S_0$ is the smallest Serre subcategory containing all the minimal objects of $\Coh(X)$.
	By Lemma \ref{min}, these are precisely the skyscraper sheaves.
	By Lemma \ref{small}, $\cat S_0 = \Coh_{\leq 0}(X)$.
	
	Suppose the theorem is true for $k-1$, let $\cat Q = \Coh(X)/\Coh_{\leq k-1}(X) = \Coh(X)/\cat S_{k-1}$.
	Let $\cat S$ be the smallest Serre subcategory of $\cat Q$, containing all minimal objects of $\cat Q$.
	By definition, $\cat S_k$ is the kernel of $\Coh(X) \to \cat Q\to \cat Q/\cat S$.
	By Lemma \ref{min}, the minimal objects of $\cat Q$ are precisely the $\O_Z$ with $Z$ integral of dimension $k$.
	Combined with Lemma \ref{small}, we see that $\cat S$ is the image of $\Coh_{\leq k}(X)$.
	Hence, $\cat S_k = \Coh_{\leq k}(X)$.
\end{proof}
The following follows from Lemma \ref{hom}, and will be useful later.
\begin{lem}\label{residue}
Let $P$ be a non-zero minimal object in $\cat C_k$, corresponding to a (non-necessarily closed) point $x \in X$.
We have $\Hom_{\cat C_k}(P,P)=\kappa(x)$, where $\kappa(x)$ is the residue field of the point $x$.
\end{lem}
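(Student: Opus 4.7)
The plan is to reduce, via Lemma \ref{min}, to computing $\Hom_{\cat C_k}(\O_Z,\O_Z)$ for $Z=\overline{\{x\}}$ an integral closed subscheme of dimension $k$ with generic point $x$, and then feed this into Lemma \ref{hom}. Since $\Hom_{\cat C_k}(P,P)$ depends only on the isomorphism class of $P$ in $\cat C_k$, I may replace $P$ by $\O_Z$ at the outset. Write $i\colon Z \into X$ for the closed immersion and, for any open $U\subset X$, $i_U\colon Z\cap U \into U$ for its restriction.

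Applying Lemma \ref{hom} with $\Sigma$ the collection of closed subsets of dimension $\leq k-1$ gives
\begin{align*}
\Hom_{\cat C_k}(\O_Z,\O_Z) \;=\; \varinjlim_{\substack{U\subset X \text{ open}\\ \dim(X\setminus U)\leq k-1}} \Hom_U\!\left((\O_Z)_U,(\O_Z)_U\right).
\end{align*}
Since $(\O_Z)_U = i_{U*}\O_{Z\cap U}$ and pushforward along the closed immersion $i_U$ is fully faithful, each term in the colimit is canonically identified with $\Gamma(Z\cap U,\O_{Z\cap U})$, a subring of the function field of $Z$. One checks that the transition maps in the colimit are the evident restrictions and that the multiplication in $\Hom_{\cat C_k}(\O_Z,\O_Z)$ induced by composition of roofs matches the ring multiplication on sections; this is direct from the construction in the proof of Lemma \ref{hom}.

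It remains to show that the system $\{Z\cap U\}$ is cofinal among open neighborhoods of $x$ in $Z$, so that the colimit coincides with the stalk $\O_{Z,x}=\kappa(x)$. Given any open $V\subset Z$ containing $x$, the complement $Z\setminus V$ is closed in $Z$ and does not contain the generic point of $Z$; since $Z$ is integral of dimension $k$, this forces $\dim(Z\setminus V)\leq k-1$. Because $Z$ is closed in $X$, the set $Z\setminus V$ is also closed in $X$ and still of dimension $\leq k-1$, so $U\coloneq X\setminus (Z\setminus V)$ is an admissible open with $Z\cap U=V$. Passing to the colimit over such $U$ therefore yields the stalk at the generic point, $\O_{Z,x}=\kappa(x)$.

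There is no real obstacle here: the content of Lemmas \ref{min} and \ref{hom} does essentially all the work, and the only nontrivial verification is the cofinality argument in the last paragraph, which uses only irreducibility of $Z$ and the fact that $Z$ is closed in $X$. The step most deserving of care is the compatibility of the ring structures along the colimit, but this is immediate once one unwinds the construction of composition of roofs in Lemma \ref{hom}.
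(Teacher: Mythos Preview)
Your proposal is correct and follows exactly the route the paper indicates: the paper merely states that the lemma ``follows from Lemma \ref{hom}'', and your argument is precisely the natural unpacking of that claim via Lemma \ref{min} and the cofinality of the opens $\{Z\cap U\}$ among neighborhoods of the generic point.
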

From now on we will write $\C_k = \C_k(X) = \Coh(X)/\Coh_{\leq k-1}(X)$.

\section{A locally ringed space}
For this section, let $X$ be a finite-dimensional noetherian scheme, which (as per our blanket conventions) is moreover defined over our base ring $\k$.
Our present goal is to define an auxiliary locally ringed space $S = S(X,k,L)$, depending on $X$, an integer $k$ and a sheaf $L$.
This space will control the isomorphism type of $X$ up to subsets of dimension $k-1$. 
A posteriori it will be obvious that $S$ does not depend on the sheaf $L$.
The reason we initially insist on the dependency on $L$ will become clear in the next section: an equivalence $\C_k(X) \simeq \C_k(Y)$ need not send $\O_X$ to $\O_Y$.

We say a point $x \in X$ has \emph{dimension $k$} if $\dim \overline{\{x\}} = k$.
Put differently, $x$ is the generic point of a subvariety of dimension $k$, which (when $X$ is equidimensional and catenary) in turn is equivalent to $\dim \O_{X,x} = \dim X - k$.
We write $X_{\geq k} \subset X$ for the subset of points of dimension at least $k$.
In particular, $X_{\geq k}$ always contains the generic points of the irreducible components (of dimension at least $k$) of $X$.
We endow $X_{\geq k}$ with the subspace topology.
If $i\colon X_{\geq k} \to X$ denotes the inclusion, we may view the former as a locally ringed space with structure sheaf $i^{-1}\O_X$.
This space will turn out to be isomorphic to the space $S$ we are about to define.

Before we proceed, recall that $i^{-1}\O_X$ is (by definition) obtained by sheafifying the presheaf $i^+\O_X$, given by
\begin{align*}
	V \mapsto \varinjlim_{U \supset V} \O_X(U)
\end{align*}
where the limit ranges over all $U \subseteq X$ open and containing $V \subseteq X_{\geq k}$.
\begin{lem}\label{nosheaf}
	The presheaf $i^+\O_X$ is already a sheaf, hence
	\begin{align*}
		i^{-1}\O_X(V) = \varinjlim_{U \supset V} \O_X(U).
	\end{align*}
\end{lem}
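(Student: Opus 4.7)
The plan is to verify the sheaf axioms for $i^+\O_X$ directly on $X_{\geq k}$. Two preliminary observations set the stage: opens in $X_{\geq k}$ are precisely traces $U \cap X_{\geq k}$ of opens in $X$; and, being a subspace of the noetherian space $X$, the space $X_{\geq k}$ is itself noetherian, so every open cover of an open $V \subseteq X_{\geq k}$ admits a finite subcover. I may therefore restrict throughout to finite covers $V = V_1 \cup \cdots \cup V_n$.

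Separation is the easy half. Given $s \in i^+\O_X(V)$ represented by $\tilde s \in \O_X(U_0)$ with $U_0 \supseteq V$, and assuming $s|_{V_\alpha} = 0$ for each $\alpha$, I can find opens $U_\alpha \subseteq U_0$ containing $V_\alpha$ on which $\tilde s$ already vanishes; the sheaf property of $\O_X$ applied to $U \coloneq \bigcup_\alpha U_\alpha \supseteq V$ then gives $\tilde s|_U = 0$, so $s = 0$ in the colimit.

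The main work is gluing. Given compatible sections $s_\alpha \in i^+\O_X(V_\alpha)$, represent each as $\tilde s_\alpha \in \O_X(U_\alpha)$ with $U_\alpha \supseteq V_\alpha$. Pairwise compatibility supplies, for each $(\alpha,\beta)$, an open $U'_{\alpha\beta} \subseteq U_\alpha \cap U_\beta$ containing $V_\alpha \cap V_\beta$ on which $\tilde s_\alpha = \tilde s_\beta$. The obstruction to gluing lives on the closed sets $Z_{\alpha\beta} \coloneq (U_\alpha \cap U_\beta) \setminus U'_{\alpha\beta}$, which by construction satisfy $Z_{\alpha\beta} \cap X_{\geq k} = \emptyset$. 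The key observation --- and the crux of the argument --- is that such a set has dimension $<k$: any point $z \in Z_{\alpha\beta}$ lies outside $X_{\geq k}$, so $\dim \overline{\{z\}} < k$, and because $Z_{\alpha\beta}$ has finitely many irreducible components its closure $\overline{Z_{\alpha\beta}}$ in $X$ still has dimension $<k$ and remains disjoint from $X_{\geq k}$.

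To conclude, I would form $\bar Z \coloneq \bigcup_{\alpha,\beta} \overline{Z_{\alpha\beta}}$, a closed subset of $X$ of dimension $<k$ disjoint from $X_{\geq k}$, and replace each $U_\alpha$ by $U'_\alpha \coloneq U_\alpha \setminus \bar Z$. Then $\{U'_\alpha\}$ covers $U \coloneq \bigcup_\alpha U'_\alpha \supseteq V$, the overlaps satisfy $U'_\alpha \cap U'_\beta \subseteq U'_{\alpha\beta}$, and so the $\tilde s_\alpha|_{U'_\alpha}$ agree on overlaps and glue to a section $\tilde s \in \O_X(U)$ representing the desired element of $i^+\O_X(V)$. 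The only delicate point is precisely this dimension count: one must know that the discrepancies $Z_{\alpha\beta}$, and in particular their closures in $X$, are small enough that excising them preserves the containment $U \supseteq V$.
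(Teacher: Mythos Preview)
Your proof is correct and follows essentially the same route as the paper's: reduce to finite covers by noetherianity, verify separation by passing to a common open refinement, and for gluing excise the closures of the pairwise discrepancy loci $Z_{\alpha\beta}$ (the paper's $W_{ij}$) before invoking the sheaf property of $\O_X$. One small point worth making explicit in both arguments: the claim $Z_{\alpha\beta} \cap X_{\geq k} = \emptyset$ requires that $U_\alpha \cap X_{\geq k} = V_\alpha$ exactly (not merely $U_\alpha \supseteq V_\alpha$), which is harmless to arrange by intersecting $U_\alpha$ with any open of $X$ whose trace on $X_{\geq k}$ is $V_\alpha$.
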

\begin{proof}
	Let $V \subseteq X_{\geq k}$ be open.
	A section in $i^+\O_X(V)$ is represented by an $\alpha \in \O_X(U)$, for $U \subseteq X$ open and $V \subset U$.
	Let $\{V_i\}_i$ be an open cover of $V$, such that the restriction $\alpha|V_i = 0$ vanishes, for all $i$.
	We may then find open subsets $U_i \subset U$, such that $U_i \cap X_{\geq k} = V_i$ and $\alpha|U_i = 0$.
	Let $U'$ be the union of the $U_i$.
	Then, $V \subset U' \subset U$ and $\alpha|U' = 0$.
	Hence the section $[\alpha] \in i^+\O_X(V)$ must be zero.
	
	Suppose once again $\{V_i\}_i$ is an open cover of a given open $V \subset X_{\geq k}$.
	We may assume the cover to be finite.
	
	Let $[\alpha_i] \in i^+\O_X(V_i)$ be a collection of sections such that $[\alpha_i]|V_i \cap V_j = [\alpha_j]|V_i \cap V_j$.
	For each $i$, suppose $\alpha_i \in \O_X(U_i)$ for $V_i \subset U_i \subset X$.
	Now, for each pair $ij$, we may choose an open subset $V_i \cap V_j \subset U_{ij} \subset U_i \cap U_j$, such that $\alpha_i|U_{ij} = \alpha_j|U_{ij}$. Note that $W_{ij} = U_i \cap U_j \setminus U_{ij}$ has dimension at most $k-1$. 
	We replace $X$ with $U = \cup_{i} U_{i} \setminus \cup_{i,j} \overline{W_{ij}} $, which is open and contains $X_{\geq k}$. Finally, we replace the open subsets $U_i$ with $U'_i = U_i \cap U$.
	
	Since $\O_X$ is a sheaf, and $U'_i \cap U'_j \subset U_{ij}$, there exists $\alpha \in U$ restricting to $\alpha_i$ on $U_i'$, which gives the desired section as the complement of $U$ in $X$ has dimension at most $k-1$.
	
\end{proof}

\subsection{The topological space $S$}
We now come to the definition of $S$.
Let $k \geq 0$ be an integer.
As a set, $S$ consists of isomorphism classes of (non-zero) minimal objects of $\C_d(X)$, where $d$ ranges between $k$ and $\dim(X)$:
\begin{align*}
	S \coloneq \bigcup_{d \geq k} \left\lbrace 0 \neq P \in \cat C_d(X) \mid P \mbox{ minimal} \right\rbrace_{/_\text{iso}}
\end{align*}
We define a topology on $S$, by declaring what the closure of a point $s \in S$ ought to be. 
Let $P \in \Coh(X)$ represent a nonzero minimal object in $\C_d(X)$, with $d \geq k$.
Define
\begin{align*}
	Z_P \coloneq 
		\bigcup_{d \geq j \geq k} 
		\bigcap_{
			\substack{ P' \in \C_j(X) \\ P' \text{ represents } P}
			}
		\left\{ 0 \neq Q \in \C_j(X) \mid Q \text{ is minimal, } Q \text{ is a quotient of } P' \right\}_{/_\text{iso}}
\end{align*}
We endow $S$ with the coarsest topology containing the sets $S \setminus Z_P$, for all $P \in \Coh(X)$.
\begin{lem}\label{homeo}
	Define a map $X_{\geq k} \to S$ by sending $x \mapsto \O_{\overline{\{x\}}}$, where $\overline{\{x\}}$ is equipped with the reduced scheme structure.
	This map is a homeomorphism.
\end{lem}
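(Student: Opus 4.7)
The plan is to establish bijectivity of the map $\phi\colon X_{\geq k}\to S$, $x\mapsto \O_{\overline{\{x\}}}$, to identify $\phi^{-1}(Z_P)$ explicitly, and then conclude by matching subbases of closed sets.

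Bijectivity follows from Lemma \ref{min}, which identifies the nonzero minimal objects of $\C_d(X)$ (up to isomorphism in $\C_d$) with integral closed subschemes of $X$ of dimension $d$, equivalently with points $x\in X$ of dimension exactly $d$ via $x\leftrightarrow \overline{\{x\}}$. Taking the union over $d\geq k$ recovers $X_{\geq k}$ and yields the bijection $\phi$.

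The heart of the proof is the identification $\phi^{-1}(Z_P) = Z \cap X_{\geq k}$ where $Z = \overline{\{x\}}$ and $P$ corresponds to $x$. For the inclusion "$\subseteq$": if $y\mapsto \O_W \in Z_P$ with $W = \overline{\{y\}}$ of dimension $j$, take the canonical representative $P' = \O_Z$ of $P$ in $\C_j(X)$. A surjection $\O_Z \qto \O_W$ in $\C_j(X)$ comes, by Lemma \ref{exact}, from a surjection of sheaves, and evaluating at the stalk $\eta_W$---which lies in every open whose complement has dimension at most $j-1$---forces the stalk map $\O_{Z,\eta_W}\to \kappa(\eta_W)$ to be nonzero, hence $\eta_W\in Z$ and $W\subseteq Z$. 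For the inclusion "$\supseteq$", fix $W\subseteq Z$ integral of dimension $j\in[k,d]$ and an arbitrary representative $P'$ of $P$ in $\C_j(X)$; we must exhibit a surjection $P'\qto \O_W$ in $\C_j(X)$. Since $P'$ is isomorphic to $\O_Z$ in $\C_d(X)$, its stalk at $\eta_Z$ equals $\kappa(\eta_Z)$ and the support of $P'$ is closed containing $\eta_Z$, hence contains all of $Z$; in particular $P'_{\eta_W}\neq 0$, and Nakayama's lemma yields a surjection $P'_{\eta_W}\twoheadrightarrow \kappa(\eta_W)$. Extending by finite generation to a sheaf map on an open neighbourhood of $\eta_W$ and checking that the cokernel is supported strictly inside $W$ (hence in $\Coh_{\leq j-1}$), Lemma \ref{hom} interprets this as a surjection in $\C_j(X)$.

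Finally, the sets $Z \cap X_{\geq k}$ for $Z\subseteq X$ irreducible closed with generic point in $X_{\geq k}$ are exactly the closures of points of $X_{\geq k}$ in its subspace topology, and therefore form a subbase for its closed sets; meanwhile, the $Z_P$ form a subbase for the closed sets of $S$ by definition. The explicit identification $\phi^{-1}(Z_P) = Z\cap X_{\geq k}$ matches these two subbases under $\phi$, proving $\phi$ is a homeomorphism. The main technical obstacle lies in the "$\supseteq$" direction: ensuring that the open on which the stalk surjection at $\eta_W$ extends to a sheaf map has complement of dimension at most $j-1$, as required by Lemma \ref{hom} for morphisms in $\C_j(X)$; this is the point at which the $\Coh_{\leq d-1}$-equivalence of $P'$ with $\O_Z$, rather than a mere agreement at $\eta_W$, must be used carefully.
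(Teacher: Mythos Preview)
Your approach is essentially what the paper's (very terse) proof asks the reader to unpack: bijectivity from Lemma~\ref{min}, and matching the closed subbase $\{Z_P\}$ on $S$ with the closed subbase $\{\overline{\{x\}}\cap X_{\geq k}\}$ on $X_{\geq k}$ (using that $X$ is noetherian so these generate the closed sets). Your treatment of bijectivity and of the inclusion $\phi^{-1}(Z_P)\subseteq Z\cap X_{\geq k}$ is correct.

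The gap you flag in the ``$\supseteq$'' direction is genuine, but the resolution you hint at --- leaning harder on the $\Coh_{\leq d-1}$-equivalence $P'\equiv \O_Z$ --- is not the right one. What you have already extracted from that equivalence, namely $\eta_Z\in\supp P'$, hence $Z\subseteq\supp P'$, hence $P'_{\eta_W}\neq 0$, is all that is needed from it. The actual fix exploits instead that $\O_W$ is supported on $W$. Extend the stalk surjection $P'_{\eta_W}\twoheadrightarrow\kappa(\eta_W)$ to a sheaf map $P'|_V\to \O_W|_V$ on some open $V\ni\eta_W$, and set $W'=W\setminus V$. This is a proper closed subset of the $j$-dimensional irreducible $W$, hence closed in $X$ of dimension at most $j-1$. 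Now $U=X\setminus W'$ has $\dim(X\setminus U)\leq j-1$, and $W\cap U=W\cap V$, so $\O_W|_U$ is the pushforward of $\O_{W\cap V}$ along $W\cap V\hookrightarrow U$; by adjunction your map on $W\cap V$ becomes a map $P'|_U\to \O_W|_U$, nonzero at $\eta_W$. Via Lemma~\ref{hom} this is a nonzero morphism $P'\qto\O_W$ in $\C_j(X)$, automatically surjective since $\O_W$ is minimal. This is precisely the manoeuvre carried out in the second half of the proof of Lemma~\ref{min}; once you insert it, your argument is complete.
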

\begin{proof}
	Since $X$ is noetherian, the subsets $X \setminus \overline{\{x\}}$, as $x$ varies in $X$, form a basis of the topology.
	Proposition \ref{geo} and Lemma \ref{min} allow us to conclude.
\end{proof}

\subsection{A Picard group}\label{picardo}
We introduce now the analogue of $\Pic(X)$ for the category $\C_k(X)$.
We define $\Pic_{\geq k}(X)$ to be the group (under tensor product) of isomorphism classes of objects $L_1 \in \C_k(X)$ for which there exists a representative $L \in \Coh(X)$ satisfying the following property: there exists an open subset $U \subset X$, such that $X_{\geq k}\subset U$, and the restriction $L_U$ is an invertible sheaf.
Put concisely: $\Pic_{\geq k}(X)$ consists of (the $\C_k(X)$-isomorphism classes of) those $L$ which are line bundles away from a closed subset of dimension at most $k-1$. We will see in proposition \ref{equivlrs} that this definition is intrinsic to the category $\C_k(X)$.

\subsection{The locally ringed space $S$}
For $F \in \Coh(X)$, we define its \emph{topological $k$-support} to be $\supp_k F \coloneq X_{\geq k} \cap \supp F$.
Using the homeomorphism of Lemma \ref{homeo}, we may view $\supp_k F$ as a closed subset of $S$.
Similarly, if $G \in \C_d(X)$ with $d \leq k$, we define $\supp_k G = \supp_k \tilde G$, where $\tilde G \in \Coh(X)$ is any representative of $G$. 

Fix now $L \in \Pic_{\geq k}(X)$.
We will now define a sheaf of rings $\O_S$ on $S$, depending on $L$.
For $V \subset S$ an arbitrary open subset, we define
\begin{align*}
	\O_S(V) \coloneq \varinjlim_{U \supset V} \Hom_{U} (L_U,L_U)
\end{align*}
where $U \supset V$ runs over all open subsets of $X$ containing $V$.
\begin{prop}
	The assignment $V \mapsto \O_S(V)$ defines a sheaf of rings on $S$, making $(S,\O_S)$ into a locally ringed space.
	Moreover, the homeomorphism $X_{\geq k} \to S$ from Lemma \ref{homeo} may be upgraded to an isomorphism of locally ringed spaces over $\Spec \k$.
\end{prop}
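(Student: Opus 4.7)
The plan is to exhibit a canonical isomorphism of presheaves between $\O_S$ and $i^+\O_X$ (under the homeomorphism of Lemma \ref{homeo}), after which Lemma \ref{nosheaf} delivers the sheaf property and the isomorphism of locally ringed spaces at once.

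To set things up, I would fix a representative (also denoted $L$) of the class in $\Pic_{\geq k}(X)$ together with an open $U_0 \subset X$ with $X_{\geq k} \subset U_0$ on which $L_{U_0}$ is invertible. For any open $V \subset S$, identified via Lemma \ref{homeo} with an open of $X_{\geq k}$, the opens $U \subset U_0$ containing $V$ form a cofinal system inside the opens of $X$ containing $V$ (intersect with $U_0$), so
\[
	\O_S(V) = \varinjlim_{\substack{U \supset V \\ U \subset U_0}} \Hom_U(L_U, L_U).
\]
For each such $U$, the invertibility of $L_U$ yields a canonical ring isomorphism $\Hom_U(L_U, L_U) \cong \O_X(U)$ by sending $\phi$ to the unique section $f$ with $\phi = f \cdot \mathrm{id}_{L_U}$; composition of endomorphisms corresponds to multiplication of sections. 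These isomorphisms are compatible with restrictions along inclusions $U' \subset U$, hence assemble into a canonical isomorphism of presheaves $\O_S \cong i^+ \O_X$.

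Then I would invoke Lemma \ref{nosheaf}: since $i^+\O_X$ is already a sheaf, so is $\O_S$, and one reads off $(\O_S)_s = \O_{X,x}$ at the point $s \in S$ corresponding to $x \in X_{\geq k}$. As $\O_{X,x}$ is a local ring, $(S,\O_S)$ is a locally ringed space, and the homeomorphism of Lemma \ref{homeo} promotes tautologically to an isomorphism of locally ringed spaces $(X_{\geq k}, i^{-1}\O_X) \to (S,\O_S)$. The $\k$-linear structure is inherited for free from the blanket convention that all sheaves on $X$ are over $\k$, so this is an iso over $\Spec \k$.

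The main technical point is to verify the cofinality step and the canonicality of the identification $\Hom_U(L_U, L_U) \cong \O_X(U)$, confirming that $\O_S$ does not depend on the auxiliary choices of representative $L$ and of trivializing open $U_0$ — matching the remark in the setup that $S$ is, a posteriori, independent of $L$. Once these checks are in hand, the proof is essentially bookkeeping, bridged by Lemma \ref{nosheaf}.
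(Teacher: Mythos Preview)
Your proposal is correct and follows essentially the same approach as the paper: reduce to the case $L = \O_X$ via the canonical identification $\Hom_U(L_U,L_U) \cong \O_X(U)$ on the open where $L$ is invertible, then invoke Lemmas \ref{homeo} and \ref{nosheaf}. The paper phrases the reduction slightly differently (the natural map $\O_X \to \lHom_X(L,L)$ becomes an isomorphism on $U_0$), but your explicit cofinality argument and presheaf isomorphism amount to the same thing, just with more detail spelled out.
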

\begin{proof}
	Suppose first $L = \O_X$.
	Then Lemma \ref{homeo} and Lemma \ref{nosheaf} imply the claim.
	Let $L$ be general.
	There exists then an open subset $U$, with $X_{\geq k} \subset U$, such that $L$ becomes invertible once restricted to $U$.
	But then the natural map $\O_X \to \lHom_X(L,L)$ also becomes an isomorphism once restricted to $U$.
	The claim then follows from the case where $L = \O_X$.
\end{proof}

\subsection{Intermezzo}
To proceed with the final proof of this section, we must first introduce one last category.
Suppose $0 \neq P \in \C_k(X)$ is minimal, and denote by $x \in X_{\geq k}$ the corresponding point.
In particular, if $Z = \overline{\{x\}}$, the sheaf $\O_Z$ represents $P$.
The goal is to define a category $\C_P$, recovering the local ring $\O_{X,x}$ at $x$.

\begin{lem}
Given an element $F \in \Coh(X)$, the subset $\supp_k(F) \subset X_{\geq k}$ only depends on the class of $F$ in $\cat C_k$.
\end{lem}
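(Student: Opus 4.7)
The plan is to show that any isomorphism $F \qto F'$ in $\cat C_k$ can be witnessed by a pair of weak equivalences $F \xleftarrow{s} E \xrightarrow{t} F'$, then to restrict to the complement of the (low-dimensional) loci where $s$ and $t$ fail to be isomorphisms, and finally to observe that $\supp_k$ cannot detect that complement.

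First I represent the isomorphism $\phi \colon F \qto F'$ by a roof $F \xleftarrow{s} E \xrightarrow{t} F'$ with $s$ a weak equivalence (by the definition of a roof). Since $\phi$ and $s^{-1}$ are both isomorphisms in $\cat C_k$, so is $t = \phi \circ s$. Because the quotient functor $\Coh(X) \to \cat C_k$ is exact, the image of $\ker t$ in $\cat C_k$ is the kernel of the isomorphism $t$, hence zero, and analogously for $\coker t$. By Lemma \ref{exact} (first bullet) this forces $\ker t, \coker t \in \Coh_{\leq k-1}(X)$, so $t$ is also a weak equivalence.

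Next, let $W \coloneq \supp \ker s \cup \supp \coker s \cup \supp \ker t \cup \supp \coker t$, a closed subset of $X$ of dimension at most $k-1$, and set $U \coloneq X \setminus W$. Both restrictions $s_U$ and $t_U$ are honest isomorphisms in $\Coh(U)$, so $F_U \cong F'_U$. Since $W$ has dimension at most $k-1$ while every point of $X_{\geq k}$ has dimension at least $k$, we have $W \cap X_{\geq k} = \emptyset$. Therefore $\supp_k F = \supp F \cap X_{\geq k} = \supp(F_U) \cap X_{\geq k}$ and analogously for $F'$, and these two sets coincide because $F_U \cong F'_U$.

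The only subtle point I foresee is the justification that the right leg of the roof is also a weak equivalence --- equivalently, that a morphism in $\Coh(X)$ which becomes invertible in $\cat C_k$ is already a weak equivalence. This is a standard consequence of the exactness of the quotient functor combined with Lemma \ref{exact}(i); once this is in hand, the rest is a straightforward localization argument on the complement of a closed subset invisible to $X_{\geq k}$.
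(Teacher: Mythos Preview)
Your argument is correct. You reduce to the roof presentation of an isomorphism in $\cat C_k$, observe (via exactness of the quotient functor together with Lemma \ref{exact}) that both legs of the roof are weak equivalences, and then restrict to the open complement of a closed set of dimension at most $k-1$, where the isomorphism becomes honest and hence preserves supports. This is a clean localization argument and needs nothing beyond what the paper has already set up.

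The paper's own proof takes a different and in some sense stronger route: it shows that a point of $X_{\geq k}$, viewed as a minimal object $P$ of $\cat C_k$, lies in $\supp_k(F)$ if and only if $\Hom_{\cat C_k}(F,P)\neq 0$, invoking Lemma \ref{hom}. That characterization is manifestly invariant under isomorphism in $\cat C_k$, so the lemma follows in one line. Your approach is more elementary and avoids checking the ``if and only if'' for the Hom criterion, but the paper's approach yields more: it exhibits $\supp_k$ as an \emph{intrinsic} invariant of the category $\cat C_k$ (expressible purely via Hom to minimal objects), which is exactly what is needed later when transporting supports along an abstract equivalence $\cat C_k(X)\simeq\cat C_k(Y)$ and when defining $\cat S_P$.
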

\begin{proof}
Let $P$ be a minimal object in $\cat C_k$. Then by lemma (\ref{hom}) the corresponding point $P \in X_{\geq k}$ belongs to $\supp_k(F)$ if and only if $\Hom_{\cat C_k}(F,P) \neq 0$.
\end{proof}

Consider the collection of $E \in S$ such that $P \nin \supp_k E$, and let $\cat S_P \subset \C_k(X)$ be the smallest Serre subcategory containing them.
We define $\C_P$ to be the quotient $\C_k(X)/\cat S_P$. 
Furthermore, write $E_P$ for the class of $E$ in $\C_P$.
\begin{lem}\label{cplocale}
	Let $L \in \Pic_{\geq k}(X)$.
	We may identify the ring $\End_{\C_P}(L)$ with the local ring $\O_{X,x}$.
	Moreover, the functor $\C_P \to \mod(\O_{X,x})$ sending $E$ to $\Hom_{\C_P}(L,E)$ is an equivalence.
\end{lem}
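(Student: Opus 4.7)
The plan is to identify $\C_P$ with $\mod(\O_{X,x})$ through the localization functor $F \mapsto F_x$. Intuitively, $\cat S_P$ kills sheaves missing the point $x$, so inverting it amounts to passing to stalks at $x$.

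The first step is to present $\C_P$ directly as a Serre quotient of $\Coh(X)$. Let $\Sigma$ be the family of supports consisting of all closed $Z \subset X$ with $x \notin Z$, and set $\cat T_x \coloneq \Coh_\Sigma(X) = \{F \in \Coh(X) : x \notin \supp F\}$. I would show that the preimage of $\cat S_P$ under $\Coh(X) \to \C_k$ coincides with $\cat T_x$. One inclusion is immediate: $\cat T_x$ is Serre, contains $\Coh_{\leq k-1}(X)$ (since $\dim \overline{\{x\}} = k$), and contains every $\O_Z$ with $Z$ integral of dimension at least $k$ and $x \notin Z$, so its image in $\C_k$ contains all the generators of $\cat S_P$. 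The converse is a variant of Lemma \ref{small}: any $F \in \cat T_x$ is constructed, by the same inductive procedure, out of $\Coh_{\leq k-1}(X)$ and structure sheaves of the integral components of $\supp F$ (none of which contain $x$). By Lemma \ref{iso} this yields $\C_P \simeq \Coh(X)/\cat T_x$, and Lemma \ref{hom} delivers
\begin{align*}
\Hom_{\C_P}(F, G) = \varinjlim_{U \ni x} \Hom_U(F_U, G_U).
\end{align*}

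Both assertions of the lemma now follow. Restricting the colimit to the cofinal subsystem $\{U : x \in U \subset U_0\}$, where $U_0 \supset X_{\geq k}$ is the open on which $L$ is invertible, $\End_U(L_U) = \Gamma(U, \O_U)$, giving $\End_{\C_P}(L) = \O_{X,x}$. For general $F$, a similar computation produces $\Hom_{\C_P}(L, F) \cong (F \otimes L^{-1})_x \cong F_x \otimes_{\O_{X,x}} L_x^{-1}$, and since $L_x^{-1}$ is free of rank one, tensoring with it is an autoequivalence of $\mod(\O_{X,x})$ isomorphic to the identity; it therefore suffices to show that $F \mapsto F_x$ is an equivalence $\C_P \to \mod(\O_{X,x})$. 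Essential surjectivity: given a finitely generated $M$ over $\O_{X,x}$, pick an affine open $V = \Spec A \ni x$ with $x$ corresponding to $\mathfrak p$; clearing denominators in a finite $A_\mathfrak p$-presentation of $M$ yields a finitely generated $A$-module $N$ with $N_\mathfrak p = M$, and the associated coherent sheaf on $V$ extends to a coherent sheaf on $X$ by noetherianity. Full faithfulness: on the cofinal system of basic opens $D(s) \subset V$ with $s \notin \mathfrak p$, $F_V$ and $G_V$ correspond to finitely presented $A$-modules $M, N$, so $\Hom_{D(s)}(F_{D(s)}, G_{D(s)}) = \Hom_A(M, N)_s$, and the colimit over $s \notin \mathfrak p$ recovers $\Hom_A(M, N)_\mathfrak p = \Hom_{\O_{X,x}}(F_x, G_x)$.

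The main obstacle is the first step, identifying the preimage of $\cat S_P$ with $\cat T_x$: this requires reworking the proof of Lemma \ref{small} with the family of supports ``not containing $x$'' in place of ``of bounded dimension''. Once this is in hand, Lemma \ref{hom} reduces everything to standard facts about localization of finitely presented modules over a noetherian local ring.
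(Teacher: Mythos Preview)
Your proposal is correct and follows essentially the same route as the paper: identify $\C_P$ with $\Coh(X)/\Coh_{\Sigma_x}(X)$ for $\Sigma_x$ the family of closed subsets not containing $x$, apply Lemma~\ref{hom} to get $\Hom_{\C_P}(E,F)=\Hom_{\O_{X,x}}(E_x,F_x)$, and then read off both full faithfulness and essential surjectivity. The only difference is emphasis: the paper asserts the identification $\C_P=\Coh(X)/\Coh_{\Sigma_x}(X)$ in one line (``from the definition of $\cat S_P$''), whereas you spell out why the preimage of $\cat S_P$ equals $\cat T_x$ and flag the Lemma~\ref{small}-style argument; your extra care here is warranted but not a genuinely different idea.
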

In particular we may identify the stalk $E_x \in \mod(\O_{X,x})$ with the object $E_P \in \C_P$.
\begin{proof}
	Let $\Sigma_x$ be the collection of closed subsets of $X$ not containing $x$.
	From the definition of $\cat S_P$, we see that $\Coh(X)/\cat S_{\Sigma_x} = \C_P$. 
	Using Lemma \ref{hom}, we see
	\begin{align*}
		\Hom_{\C_P}(E,F) = \varinjlim_{x \in U \subset X} \Hom_U(E_U,F_U) = \Hom_{\O_{X,x}}(E_x,F_x).
	\end{align*}
	Since $L_x = \O_{X,x}$, and $\O_{X,x} = \End_{\O_{X,x}}(\O_{X,x}) = \End_{\C_P}(L)$.
	Hence the functor is fully faithful.
	To prove it is essentially surjective, it suffices to observe that, given a finitely generated $\O_{X,x}$-module $M$, there exists a coherent sheaf $E \in \Coh(X)$ such that $E_x = M$.
\end{proof}
\begin{rmk}\label{funziona}
	When $X$ in the above lemma is integral, and $x$ is its generic point, then $\O_{X,x} = K(X)$ is its function field.
	It then follows that $\C_{\dim X}(X) = \C_{\O_X}$, and the functor $\C_{\O_X} \to \mod(K(X))$ is an equivalence.
	Hence $\C_{\dim X}(X)$ captures the birational type of $X$.
\end{rmk}

\subsection{A non-commutative remark}
Following up Remark \ref{funziona}, we briefly discuss a non-commutative avenue.
One perspective on non-commutative geometry is that a non-commutative space should be given by an abelian category $\cat A$, satisfying some niceness properties.
Much work has been devoted to this point of view, see for example \cite{artinzhang,artintatevdb,rosenbergncag,vdbblow}.
A basic question is then whether there exists a non-commutative analogue of birational geometry.
For example in \cite{smith} one finds a candidate for a function field of a general $\cat A$, and in \cite{presotto} there are interesting examples of birational non-commutative surfaces.

However, a complete and satisfactory theory of non-commutative birational geometry does not seem to exist presently.
For instance, to capture higher dimensional phenomena (such as flops and flips), one needs to know not just when two spaces are merely birational, but also when they are isomorphic in a certain codimension.

On the other hand, as we discussed in the previous section, the categories $\Coh_{\leq k}(X)$ are \emph{intrinsic} to the category $\Coh(X)$.
Indeed, given an abelian category $\cat A$, we may form our sequence of quotients $\cat A_k$ as in \eqref{quotients_of_cats}.
In light of Remark \ref{funziona}, if there exists an $n$ such that $\cat A_n \neq 0$ but $\cat A_{n+1} = 0$, we would view $\cat A_n$ as the \emph{non-commutative function field} of $\cat A$.
It would be interesting to see the relation between this and Smith's function field \cite{smith}.

In general, given the main theorem of this paper, the quotient categories $\cat A_k$ (for $k \geq n$) could be seen as capturing the non-commutative space $\cat A$, up to a certain codimension.

\subsection{Equivalences}
We conclude this section by showing that equivalences of the quotient categories induce isomorphism of the locally ringed spaces.
\begin{prop}\label{equivlrs}
	Let $X$, $Y$ be schemes of finite type over $\k$.
	A $\k$-linear equivalence $\Phi\colon \C_k(X) \to \C_k(Y)$ induces an isomorphism of locally ringed spaces $\phi\colon X_{\geq k} \to Y_{\geq k}$.
\end{prop}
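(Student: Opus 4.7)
The strategy is to show that every ingredient used to define $(S,\O_S)$ is intrinsic to the abelian category $\C_k(X)$, so that any $\k$-linear equivalence $\Phi$ transports it automatically, and then to check locality of the induced morphism on stalks.

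First, the iterated Serre filtration of Section \ref{subdimension} is purely categorical, and by Proposition \ref{geo} corresponds inside $\C_k(X)$ to the images of $\Coh_{\leq d-1}(X)$ for $d \geq k$. Since $\Phi$ preserves minimal objects and Serre subcategories, Lemma \ref{iso} produces compatible induced equivalences $\Phi_d \colon \C_d(X) \xrightarrow{\sim} \C_d(Y)$ for every $d \geq k$. Each $\Phi_d$ bijects isomorphism classes of minimal objects, and these bijections glue to a map $\phi\colon S(X) \to S(Y)$ on underlying sets. Moreover, the closed sets $Z_P$ defining the topology of $S$ are phrased purely in terms of minimality and quotients, both preserved by equivalences, so $\phi(Z_P) = Z_{\Phi(P)}$ and $\phi$ is a homeomorphism. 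The topological support $\supp_k F$ (characterized in the lemma preceding Lemma \ref{cplocale}) and hence the Serre subcategories $\cat S_P \subset \C_k$ are likewise intrinsic, so $\Phi$ further descends to equivalences $\C_P(X) \simeq \C_{\phi(P)}(Y)$ for each minimal $P$.

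For the structure sheaf, the key reformulation is that for any open $V \subset S(X)$,
\begin{align*}
\O_{S(X)}(V) \cong \End_{\C_k(X)/\cat S_V}(L),
\end{align*}
where $\cat S_V$ is the Serre subcategory of $\C_k(X)$ generated by the minimal objects whose corresponding points lie outside $V$. This follows from Lemma \ref{hom} by rewriting the colimit $\varinjlim_{U \supset V} \End_U(L_U)$ as a $\Hom$-set in a quotient category, using that open subsets $U$ with $U \supset V$ correspond to Serre subcategories contained in $\cat S_V$. Putting $M := \Phi(L) \in \C_k(Y)$, the equivalence $\Phi$ directly produces a ring isomorphism $\O_{S(X)}(V) \cong \End_{\C_k(Y)/\cat S_{\phi(V)}}(M)$ natural in $V$.

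To close the argument one must verify that $M \in \Pic_{\geq k}(Y)$, so that the right-hand side above matches the definition of $\O_{S(Y)}(\phi(V))$. By Lemma \ref{cplocale}, for $L \in \Pic_{\geq k}(X)$ and every minimal $P$ the functor $\Hom_{\C_P}(L,-)\colon \C_P \to \mod(\End_{\C_P}(L))$ is an equivalence with commutative target, and this characterization is purely categorical and hence carried across by $\Phi$ (applied inside each $\C_P$); thus $M \in \Pic_{\geq k}(Y)$. Locality on stalks then follows because $\O_{S(X),s} \cong \O_{X,x}$ by Lemma \ref{cplocale}, and the maximal ideal is intrinsically characterized as the unique one of the local ring. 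The main obstacle is precisely this intrinsic characterization of $\Pic_{\geq k}$: everything else is bookkeeping once the Serre-filtration structure and the support formalism are known to be categorical.
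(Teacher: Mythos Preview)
Your overall architecture matches the paper's: both arguments come down to showing that $M := \Phi(\O_X)$ lies in $\Pic_{\geq k}(Y)$, after which the identification $S(Y,k,M) \simeq Y_{\geq k}$ finishes the job. The bookkeeping you describe (intrinsic Serre filtration, the sets $Z_P$, the subcategories $\cat S_P$, the rewriting of $\O_S(V)$ as an $\End$ in a quotient) is correct and mirrors what the paper has set up in the preceding section.

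The genuine gap is in the step ``thus $M \in \Pic_{\geq k}(Y)$.'' You invoke Lemma~\ref{cplocale} to say that for $L \in \Pic_{\geq k}(X)$ the functor $\Hom_{\C_P}(L,-)$ is an equivalence with commutative target, call this a ``characterization,'' and conclude. But Lemma~\ref{cplocale} only gives the forward implication. The converse --- that any object $M$ satisfying this categorical property is represented by a line bundle on an open containing $Y_{\geq k}$ --- is precisely the content you need, and you have not supplied it. It is not hard (the hypothesis forces $M_y$ to be a progenerator of $\mod(\O_{Y,y})$ with commutative endomorphisms, hence free of rank one, at every point $y$ of dimension $k$; then one uses openness of the rank-one locus), but it \emph{is} the substance of the proof. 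The paper carries this out explicitly, though via a different categorical property of $\O_X$: that $\Hom_{\C_k}(\O_X,P)$ is one-dimensional over $\End(P)$ for every minimal $P$, together with a local-maximality condition; it then runs the semicontinuous rank function and Nakayama argument to produce the required open set. Your Morita-flavoured criterion is arguably cleaner, but you still owe the reader the reverse implication.

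A smaller point: your $\cat S_V$, ``generated by the minimal objects whose points lie outside $V$,'' is correct only if ``minimal objects'' means points of $S$ (i.e.\ minimal objects of the various $\C_d$, $d\geq k$, lifted back to $\C_k$). Read literally as minimal objects of $\C_k$ alone, the subcategory is too small: for $X = X_1 \sqcup X_2$ with $V \subset (X_1)_{\geq k}$, it would fail to contain $\O_{X_2}$.
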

\begin{proof}
	Let $L \in \Coh(Y)$ be a representative for $\Phi(\O_X)$.
	We want to show that $L$ is a line bundle away from a small enough closed subset.
	Notice first that $\O_X$ satisfies the following property: if $P$ is any non-zero minimal object of $\C_k(X)$, then $\Hom_{\C_k(X)}(\O_X,P)$ is a one-dimensional $\End_{\C_k(X)}(P)$ vector space.
	Moreover, $\O_X$ is \emph{locally maximal} with respect to this property: given $F$ such that $\Hom_{\C_k(X)}(F,P)$ is one-dimensional, any surjection $F \to \O_X$ in $\C_P$ must be an isomorphism. 
	Since these two properties are categorical (i.e. intrinsic to $\C_k(X)$ and the minimal object $P$) they will satisfied by $L$ as well.
	
	We claim that, for any sheaf $L$ satisfying the two properties above, there exists an open subset $U \subset Y$, with $Y_{\geq k} \subset U$, such that the restriction $L_U$ is a line bundle.
	Indeed,	consider the function $\beta\colon Y \to \N$, taking $y \in Y$ to the rank of the fibre $\dim_{\kappa(y)} L \otimes \kappa(y)$.
	This function is upper semicontinuous.
	The locus $\{\beta = 0 \}$ is therefore open.
	Let $Z$ be the union of the irreducible components of dimension strictly less than $k$.
	We see that $\{\beta = 0\} \cap (Y \setminus Z)$ is empty.
	Therefore, the locus $U = \{ \beta = 1 \} \cap (Y \setminus Z)$ is open, and (by assumption) contains all points of dimension $k$.
	By Nakayama, the $\O_{Y,y}$-module $L_{y}$ has rank $1$ at all points $y$ in $U$.
	Using Lemma \ref{cplocale}, and the local maximality property of $L$, we deduce that $L_y=\O_{Y,y}$ at all points of dimension $k$, as there is by definition a surjection $\O_{Y,y}\rightarrow L_y$, which must be an isomorphism by maximality.
	Finally, the set of points $y$ such that $L_y$ is free is open.
	Since it contains all points of dimension $k$ it follows that $L_y$ is free of rank $1$ at all points in a subset $U'$ containing all points of dimension $k$.
	
	From the discussion above it follows that the equivalence $\Phi$ induces an isomorphism of locally ringed spaces $S(X,k,\O_X) \simeq S(Y,k,\Phi(\O_X))$.
	The proposition then follows: the former is isomorphic to $X_{\geq k}$, and the latter to $Y_{\geq k}$, as locally ringed spaces.
\end{proof}

\section{Gabriel's theorem}\label{gabrielsection}
As usual, let $\k$ be our base ring, and let $X$, $Y$ be schemes over it.
Recall that we write $\cat C_k(X)$ for the quotient $\Coh(X)/\Coh_{\leq k-1}(X)$, with analogous notation for $Y$.
To prove our main theorem we will deal with the two directions separately.
\subsection{The ``hard'' direction}
Suppose we have an equivalence of categories $\cat C_k(X) \simeq \cat C_k(Y)$, which by default is assumed linear over $\k$.
Proposition \ref{equivlrs} implies there is an isomorphism of locally ringed spaces $X_{\geq k} \simeq Y_{\geq k}$ over $\Spec \k$.
\begin{prop}\label{puppa}
	Let $\phi\colon X_{\geq k} \to Y_{\geq k}$ be an isomorphism of locally ringed spaces over $\Spec \k$.
	There exist open subsets $U \subset X, V \subset Y$ containing all points of dimension $\geq k$, and an isomorphism $f \colon U \rightarrow V$ of $\k$-schemes, which restricts to $\phi$.
\end{prop}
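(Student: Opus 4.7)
The plan is to spread the LRS isomorphism $\phi$ out to a scheme-theoretic morphism on an honest open subset using the finite-type hypothesis. Since $Y$ is noetherian, cover it by finitely many affine opens $V_i = \Spec B_i$ with each $B_i$ finitely generated over $\k$, and set $W_i = V_i \cap Y_{\geq k}$. The pullback along $\phi$ combined with Lemma \ref{nosheaf} gives a $\k$-algebra map
\[
	B_i \longrightarrow \O_{X_{\geq k}}(\phi^{-1}(W_i)) = \varinjlim_{U \supset \phi^{-1}(W_i)} \O_X(U).
\]
Because $B_i$ is finitely presented over the noetherian ring $\k$ (finitely many generators, finitely many relations, both of which can be realized on a single cofinal $U$), this factors through $\O_X(U_i)$ for some open $U_i \subset X$ containing $\phi^{-1}(W_i)$, yielding a morphism $f_i\colon U_i \to V_i$ of $\k$-schemes whose induced stalk maps at points of $\phi^{-1}(W_i)$ coincide with those of $\phi$.

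By the standard fact that a finite-type morphism of noetherian schemes inducing an isomorphism on local rings at a point is in fact an isomorphism on some open neighborhood of that point, one can shrink $U_i$ and $V_i$ so that $f_i$ is locally an open immersion near every $x \in \phi^{-1}(W_i)$. To upgrade this to a genuine open immersion (ruling out that distinct points of $U_i$ collapse in $V_i$), I would run the symmetric construction using $\phi^{-1}$ to build maps $g_i\colon V_i' \to U_i'$ going the other way; the compositions $g_i \circ f_i$ and $f_i \circ g_i$ agree with the identity on $X_{\geq k}$ and $Y_{\geq k}$ respectively, and after restricting to the open subscheme of $X$ on which every irreducible component has dimension $\geq k$ (so that $X_{\geq k}$ is topologically dense), uniqueness of morphisms into the affine target forces these compositions to equal the identity on suitable opens. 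The upshot is that, after shrinking, $f_i$ is an isomorphism between opens $U_i^\circ \supset \phi^{-1}(W_i)$ and $V_i^\circ \supset W_i$.

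Finally, glue: on overlaps $U_i^\circ \cap U_j^\circ$, restrict $f_i$ and $f_j$ to the (open) preimage of $V_i \cap V_j$, which is open in the separated scheme $V_i$, observe that $f_i$ and $f_j$ agree on $X_{\geq k}$, conclude their equalizer is a closed subscheme whose complement is disjoint from $X_{\geq k}$, and delete the finitely many resulting disagreement loci. This produces the desired isomorphism $f\colon U \to V$ with $U \supset X_{\geq k}$, $V \supset Y_{\geq k}$, and $f$ restricting to $\phi$. I expect the main obstacle to be the promotion step in the second paragraph: going from an ``iso on stalks at each $x$'' to a genuine open immersion on a single open, which is what forces the symmetric construction with $\phi^{-1}$; the surrounding steps are routine affine-local bookkeeping using Lemma \ref{nosheaf} and the finite-type hypothesis.
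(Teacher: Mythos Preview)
Your proposal is correct and follows essentially the same strategy as the paper: spread out the map on affine charts using the finite-type hypothesis and Lemma \ref{nosheaf}, promote to a local isomorphism near each point of $X_{\geq k}$, then glue after excising low-dimensional disagreement loci.

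The only real difference is in the promotion step. The paper isolates two auxiliary lemmas (Lemmas \ref{lociso} and \ref{uniqiso}): the first says an isomorphism of local rings at a pair of points extends to an isomorphism of open neighborhoods, and the second says two finite-type maps agreeing on a stalk agree on a neighborhood. Combining them shows directly that each $f_i$ is an isomorphism near every $p\in X_{\geq k}$, without ever invoking $\phi^{-1}$. Your route---build $g_i$ from $\phi^{-1}$ symmetrically and check that the compositions equal the identity on opens---achieves the same thing and is arguably cleaner for passing from ``local isomorphism at each point'' to ``isomorphism on a single open containing $X_{\geq k}$,'' a passage the paper handles somewhat tersely. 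For the gluing, the paper again appeals to Lemma \ref{uniqiso}, while you use the equalizer-is-closed argument; note that for this to work over non-reduced $X$ you need agreement on stalks (which you have, since all $f_i$ restrict to $\phi$), not merely topological density, so your phrase ``agree on $X_{\geq k}$'' should be read in that sense.
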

Let us start with a lemma.
\begin{lem}\label{lociso}
	Let $X, Y$ be schemes of finite type over $\k$.
	Assume there is an isomorphism of $\k$-algebras between local rings $\O_{X,p} \simeq \O_{Y,q}$ where $p \in X$, $q \in Y$ are not necessarily closed.
	Then there are open subschemes $p \in U \subset X$, $q \in V \subset Y$ such that $U \simeq V$ as $\k$-schemes.
\end{lem}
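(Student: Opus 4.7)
The plan is to spread out the local isomorphism to a Zariski open isomorphism via the standard finite-presentation argument. First I would reduce to the affine case by choosing affine open neighbourhoods $\Spec A \subset X$ of $p$ and $\Spec B \subset Y$ of $q$, with corresponding primes $\mathfrak p \subset A$ and $\mathfrak q \subset B$, together with mutually inverse $\k$-algebra isomorphisms $\phi \colon A_{\mathfrak p} \to B_{\mathfrak q}$ and $\psi \colon B_{\mathfrak q} \to A_{\mathfrak p}$.

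Next, I would use that $A$ is finitely presented over $\k$ (it is of finite type over the noetherian ring $\k$) while $B_{\mathfrak q} = \varinjlim_{g \notin \mathfrak q} B_g$ is a filtered colimit. Writing $A = \k[x_1,\dots,x_n]/I$ with $I$ finitely generated, each $\phi(x_i) \in B_{\mathfrak q}$ lifts to some $B_g$; since only finitely many relations in $I$ must be enforced and each vanishes in $B_{\mathfrak q}$, after enlarging $g$ (to a product with finitely many more elements outside $\mathfrak q$) we obtain a genuine $\k$-algebra map $\tilde\phi \colon A \to B_g$ whose further localisation to $B_{\mathfrak q}$ recovers $\phi$. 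Symmetrically one obtains $\tilde\psi \colon B \to A_f$ for some $f \notin \mathfrak p$ recovering $\psi$.

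I would then successively enlarge $f$ and $g$ to force mutual inverseness on the nose. The element $\tilde\phi(f) \in B_g$ becomes a unit in $B_{\mathfrak q}$, so after inverting one more element of $B$ outside $\mathfrak q$ it is already a unit in $B_g$, promoting $\tilde\phi$ to a map $A_f \to B_g$; symmetrically for $\tilde\psi$. The two compositions $\tilde\psi \circ \tilde\phi$ and $\tilde\phi \circ \tilde\psi$ become the identity after further localisation at $\mathfrak p$ and $\mathfrak q$ respectively. Since $A_f$ and $B_g$ are finitely generated $\k$-algebras, this identity reduces to checking finitely many equalities, each of which can be forced on the nose by inverting one more element outside $\mathfrak p$ or $\mathfrak q$. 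The result is an honest isomorphism $A_{f^*} \cong B_{g^*}$, and the corresponding affine open subschemes $U = \Spec A_{f^*} \subset X$ and $V = \Spec B_{g^*} \subset Y$ are the required neighbourhoods of $p$ and $q$.

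The main obstacle is purely bookkeeping: repeatedly inverting finitely many elements lying outside $\mathfrak p$ (respectively $\mathfrak q$) in order to upgrade identities that hold only after localisation at $\mathfrak p$ (or $\mathfrak q$) to identities holding after inverting a single element. Noetherianity of $A$ and $B$ (inherited from $\k$) keeps every stage finitary, so the process terminates and the resulting $f^*$, $g^*$ remain outside $\mathfrak p$, $\mathfrak q$ as required.
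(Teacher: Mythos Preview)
Your argument is correct and is the standard two--sided spreading-out proof. The paper's proof is a one--sided variant of the same idea: it only lifts the map $B \to B_{\mathfrak q} \xrightarrow{\sim} A_{\mathfrak p}$ to a morphism $\Spec(S^{-1}A) \to \Spec B$ (exactly your construction of $\tilde\psi$), and then, instead of building the inverse $\tilde\phi$ by hand and chasing the two compositions, it observes that the resulting morphism of finite-type $\k$-schemes induces an isomorphism on local rings at $p \mapsto q$ and invokes openness of the isomorphism locus (phrased there as ``unramified, quasi-finite, of degree one'') to shrink to an honest isomorphism.

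Your version is slightly longer but entirely self-contained and elementary: it never leaves commutative algebra and does not appeal to any black-box about étale or isomorphism loci being open. The paper's version is terser but relies on that geometric fact. Either way the substance is the same finite-presentation/filtered-colimit mechanism, and your bookkeeping remark about noetherianity keeping each step finitary is exactly the point.
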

\begin{proof}
Fix open neighborhoods $p \in \Spec(A) \subset X, q \in \Spec(B)\subset Y$, and consider the composition $B \rightarrow B_{q} \rightarrow A_p$.
Given a finite set of generators $y_1, \ldots, y_n$ for $B$ as a $\k$-algebra, their images will be in the form $x_1/s_1, \ldots , x_n/s_n$ where $x_1,\ldots,x_n$ belong to $A$ and $s_1,\ldots,s_n$ belong to $A \setminus I(p)$.
Here $I(p) \subset A$ denotes the prime ideal corresponding to the point $p \in \Spec A$.

The image of $B$ is thus contained in $S^{-1}A$ where $S=\lbrace s_1^{i_1} \ldots s_n^{i_n}\rbrace$.
The inclusion $\Spec(S^{-1}A)\subset \Spec(A)$ is open and contains the point $p$.
Hence we just constructed a map $W \rightarrow \Spec(B)$ where $W$ is a neighborhood of $p$, sending $p$ to $q$.
By restricting to open subsets $p \in U \subset \Spec(A)$ and $q \in V \subset \Spec(B)$, we may further assume this map to be unramified, quasi-finite, and of degree one at all points, i.e. an isomorphism.
\end{proof}

\begin{lem}\label{uniqiso}
Let $U=\Spec(A), V=\Spec(B)$ be affine schemes of finite type over $\k$, and let $f, g \colon U \rightarrow V$ be two morphisms.
Suppose further that there is a $p \in U$ such that $f(p)=g(p)=q$, and $f^*_{\mid\O_{V,q}}=g^*_{\mid \O_{V,q}}$.
Then there is an open subscheme $p \in U' \subset U$ such that $f$ and $g$ are equal when restricted to $U'$. 
\end{lem}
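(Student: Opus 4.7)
The plan is to exploit finite generation to clear denominators. Both morphisms $f, g$ correspond to $\k$-algebra maps $f^*, g^* \colon B \to A$, and the hypothesis $f^*_{\mid \O_{V,q}} = g^*_{\mid \O_{V,q}}$ says that the induced maps $B \to A_{I(p)}$ (factoring through $B_{I(q)}$) coincide. So for every $b \in B$, the difference $f^*(b) - g^*(b)$ vanishes in the localization $A_{I(p)}$, which means it is annihilated by some $s_b \in A \setminus I(p)$.

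Next, I would use that $V = \Spec B$ is of finite type over $\k$: pick a finite set of $\k$-algebra generators $b_1, \ldots, b_n$ for $B$. For each $i$, choose $s_i \in A \setminus I(p)$ killing $f^*(b_i) - g^*(b_i)$, and let $s = s_1 \cdots s_n$. Since $A \setminus I(p)$ is multiplicatively closed, $s \in A \setminus I(p)$, i.e. $p \in D(s)$. By construction $f^*(b_i) = g^*(b_i)$ in $A_s = A[s^{-1}]$ for every $i$.

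Now both composed $\k$-algebra maps $B \to A \to A_s$ agree on the generators $b_1, \ldots, b_n$, hence they agree on all of $B$. Translating back to schemes, the open subscheme $U' \coloneq D(s) \subset U$ contains $p$ and the restrictions $f|_{U'}, g|_{U'} \colon U' \to V$ are equal, as desired.

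There is no real obstacle here; the argument is elementary and just uses finite generation together with the standard description of elements of a localization. The only point to be a little careful about is the direction of the induced map on stalks (one needs $f(p) = g(p) = q$ to even compare $f^*$ and $g^*$ on $\O_{V,q}$), but this is built into the hypotheses.
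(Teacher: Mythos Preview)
Your proof is correct and follows essentially the same approach as the paper: pick finitely many $\k$-algebra generators of $B$, use the hypothesis to clear denominators so that $f^*$ and $g^*$ agree on these generators after inverting finitely many elements of $A\setminus I(p)$, and take the resulting principal open as $U'$. The only cosmetic difference is that you invert the single product $s=s_1\cdots s_n$ whereas the paper inverts the finite set $S=\{s_1,\ldots,s_n\}$, which of course yields the same open subscheme.
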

\begin{proof}
Let $y_1, \ldots, y_n$ be a set of generators for $B$ as a $\k$-algebra.
We know $f(x_i)=g(x_i)$ in the localization $A_{I(p)}$ for all $i$.
After inverting a finite number of elements in $A \setminus I(p)$ we will have $f(x_i)=g(x_i)$ in $S^{-1}A$, for some (finite) set $S \subset (A \setminus I(p))$.
To conclude, set $U' = \Spec(S^{-1}A) \subset \Spec(A)$.
\end{proof}

\begin{proof}[Proof of Proposition \ref{puppa}]
Let $V \subset Y$ be an affine open subset.
As $Y$ is of finite type over $\k$, the algebra $\O_Y(V)$ is generated by a finite set $y_1, \ldots , y_n$ of generators.
Consider $y_1, \ldots, y_n$ as elements of $\O_{Y_{\geq k}}(V \cap Y_{\geq k})$.
There exists an open subset $U \subset X$, containing the inverse image of $V \cap Y_{\geq k}$, such that the elements $\phi^*(y_1), \ldots, \phi^*(y_n)$ are represented by $x_1, \ldots, x_n \in \O_X(U)$.
This induces a morphism $\phi_{U}\colon U \rightarrow V$. 

The morphism $\phi_{U}$ is an isomorphism at all the local rings of points $p \in V \cap Y_{\geq k}$.
By Lemmas \ref{lociso}, \ref{uniqiso} given any point $p \in U \cap Y_{\geq k}$ there is a neighborhood $U_p \subset U$ such that $(\phi_{U})_{\mid U_{p}}$ is an isomorphism.
Up to restricting $U$ and $V$ to smaller open subsets containing all points of $X_{\geq k} \cap U$ and $Y_{\geq k} \cap V$, we may then assume that $\phi_{U}$ is an isomorphism.

Note that, following the construction above, the morphism $\phi_U$ coincides with the original $\phi$ on all local rings in $U \cap X_{\geq k}$.
Explicitly, for each $p \in X_{\geq k}$ we have $\O_{X_{\geq k}, p} = \O_{X,p}$, and $\phi_{U,p} = \phi_p$.

Suppose now we apply the same construction to two different open subsets, obtaining isomorphisms $\phi_{U_i}\colon U_i \rightarrow V_i,\phi_{U_j}\colon U_j \rightarrow V_j$.
The two maps must agree on local rings for all $p \in U_i \cap U_j \cap Y_{\geq k}$. 
Lemma \ref{uniqiso} informs us that, up to removing a closed subset of dimension at most $k-1$ from $U_i$ and $ U_j$ and their images, the two maps must agree.
Pick now a finite open affine cover $U_1,\ldots,U_n$ of $X$.
We may iterate the procedure above to all $m$-fold intersections, with $m \leq n$, refining our open cover and obtaining the claim.
\end{proof}

\subsection{The ``easy'' direction}
The converse direction, i.e. that a birational map induces an equivalence of categories, is actually false in the generality we have maintained so far.
\begin{rmk}
	Let $X$ be the spectrum of a DVR, with closed point $x$ and open point $\eta$.
	Let $K = \O_{X,\eta}$ be the local ring at $\eta$.
	Let $Y$ be $\Spec K$, and write $y$ for the unique point of $Y$.
	Then $X$ and $Y$ are birational, in the sense that $\O_{Y,y} \simeq \O_{X,\eta}$, but $X_{\geq 1} \nsimeq Y_{\geq 1}$.
	Indeed, the former consists of just $\eta$, while the latter is empty.
	At the categorical level, $\cat C_1(X) \simeq \mod(K)$ while $\cat C_1(Y) = 0$.
	The culprit is the fact that the singleton $\{ \eta \}$ is \emph{open} in $X$, but has smaller dimension.
	This phenomenon does not occur for varieties.
\end{rmk}
However, over a field everything works just fine.
\begin{prop}
	Suppose our base ring $\k$ is a finite type algebra over a field.
	Let $j\colon U \into X$ be an open immersion.
	Suppose $\dim X \setminus U < k$.
	Then $j^*$ induces an equivalence $\C_k(X) \simeq \C_k(U)$.
\end{prop}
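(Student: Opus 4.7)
The plan is to leverage Lemma \ref{iso} to factor the passage from $\Coh(X)$ to $\C_k(U)$ through the classical Serre quotient that produces $\Coh(U)$.

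First, I would set $Z = X \setminus U$ and observe that $\Coh_Z(X)$, the sheaves set-theoretically supported on $Z$, is a Serre subcategory of $\Coh(X)$ and that $j^*$ induces an equivalence $\Coh(X)/\Coh_Z(X) \simeq \Coh(U)$. This is the standard Serre-localisation statement for an open immersion on a noetherian scheme: exactness of $j^*$ gives a functor, its kernel is $\Coh_Z(X)$, and essential surjectivity follows from the fact that any coherent sheaf on $U$ admits a coherent extension to $X$ (here $X$ is noetherian since it is of finite type over $\k$, hence over a field). Since $\dim Z < k$ by hypothesis, we have $\Coh_Z(X) \subseteq \Coh_{\leq k-1}(X)$.

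Next, I would invoke Lemma \ref{iso} with the pair $\Coh_Z(X) \subseteq \Coh_{\leq k-1}(X)$, which yields
\[
\C_k(X) = \Coh(X)/\Coh_{\leq k-1}(X) \simeq \bigl(\Coh(X)/\Coh_Z(X)\bigr) \big/ \bigl(\Coh_{\leq k-1}(X)/\Coh_Z(X)\bigr).
\]
Under the equivalence $\Coh(X)/\Coh_Z(X) \simeq \Coh(U)$ the Serre subcategory $\Coh_{\leq k-1}(X)/\Coh_Z(X)$ is carried to the essential image of $\Coh_{\leq k-1}(X)$ in $\Coh(U)$ under $j^*$. It then remains to show this image agrees with $\Coh_{\leq k-1}(U)$. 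The inclusion from left to right is immediate: $\supp j^*F = \supp F \cap U$ has dimension $\leq \dim\supp F \leq k-1$. For the reverse inclusion, given $F \in \Coh_{\leq k-1}(U)$ one picks any coherent extension $\tilde F \in \Coh(X)$ (possible because $X$ is noetherian). Then
\[
\supp \tilde F \ \subseteq\ \overline{\supp F} \cup Z,
\]
and $\dim Z \leq k-1$ while $\dim\overline{\supp F} = \dim\supp F \leq k-1$.

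The last equality is where the hypothesis that $\k$ is of finite type over a field gets used: $U$ and $X$ are then of finite type over a field and hence catenary with the property that, for every closed irreducible $W \subset U$, the closure $\overline{W}^{X}$ has the same dimension as $W$ (both equal the transcendence degree of the residue field at the common generic point). Thus $\tilde F \in \Coh_{\leq k-1}(X)$, completing the identification of Serre subcategories and yielding $\C_k(X) \simeq \C_k(U)$. The main obstacle here is really the dimension-under-closure statement in the reverse inclusion, which is exactly the point that failed in the DVR example of the preceding remark and is rescued by the field hypothesis.
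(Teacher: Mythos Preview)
Your argument is correct, and it arrives at the same destination as the paper but by a somewhat different path. The paper's one-line proof appeals directly to Lemma \ref{hom} (together with the extension result for coherent sheaves on a noetherian scheme) to see that $j^*$ is fully faithful and essentially surjective on $\C_k$: the Hom-formula in Lemma \ref{hom} reduces full faithfulness to a cofinality statement between the two families of opens, and cofinality is precisely the assertion that the closure in $X$ of a closed subset of $U$ of dimension $\leq k-1$ still has dimension $\leq k-1$. You instead route the argument through Lemma \ref{iso}, first identifying $\Coh(X)/\Coh_Z(X)\simeq\Coh(U)$ and then matching up the two Serre subcategories. Both approaches hinge on the same geometric input---that $\dim\overline{W}^X=\dim W$ for $W\subset U$ closed and irreducible---and you correctly isolate this as the one place where the finite-type-over-a-field hypothesis is used (and where the DVR counterexample bites). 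Your version has the virtue of making the structure of the argument completely explicit; the paper's has the virtue of brevity.
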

\begin{proof}
	This is a consequence of \cite[\href{http://stacks.math.columbia.edu/tag/01PI}{Tag 01PI}]{sp} and Lemma \ref{hom}.
\end{proof}

\begin{rmk}\label{Rydh}
The hypotheses in the previous proposition con be weakened. The statement holds true as long as for an open subset $U$ we always have that $U_{\geq k} = X_{\geq k} \cap U$, which by \cite[10.6.2]{EGAIV} is implied by $X$ being Jacobson, universally catenary and every irreducible component of $X$ being equicodimensional.
\end{rmk}

\subsection{The main theorem}
Assembling our previous results together, we arrive at our main result.
We say two schemes $X,Y$ are \emph{isomorphic outside of dimension $k-1$} of there exist isomorphic open subsets $U \subset X, V \subset Y$, such that $X_{\geq k} \subset U$, and $Y_{\geq k} \subset V$. 
\begin{thrm}\label{suca}
	Assume our base ring $\k$ is a finite type algebra over a field.
	Let $X,Y$ be schemes of finite type over $\k$.
	Then $X$ and $Y$ are isomorphic (over $\k$) outside of dimension $k-1$ if and only if $\cat C_k(X) \simeq \cat C_k(Y)$.
\end{thrm}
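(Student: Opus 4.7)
The plan is to assemble the machinery developed in the previous subsections; almost no further calculation is required, since the theorem is essentially a collation of Propositions \ref{equivlrs}, \ref{puppa}, and the easy-direction proposition. I would treat the two implications separately.

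For the ``hard'' direction, I would begin with a $\k$-linear equivalence $\Phi\colon \C_k(X) \to \C_k(Y)$ and feed it into Proposition \ref{equivlrs}, which produces an isomorphism $\phi\colon X_{\geq k} \to Y_{\geq k}$ of locally ringed spaces over $\Spec\k$. The next step is to apply Proposition \ref{puppa} directly to $\phi$: it upgrades this LRS isomorphism to a $\k$-scheme isomorphism $f\colon U \to V$ between open subsets $U \subset X$ and $V \subset Y$ satisfying $X_{\geq k} \subset U$ and $Y_{\geq k} \subset V$. This is precisely what ``isomorphic outside of dimension $k-1$'' means.

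For the ``easy'' direction, suppose open subsets $U \subset X$ and $V \subset Y$ are given together with a $\k$-isomorphism $U \simeq V$, and such that $X_{\geq k} \subset U$ and $Y_{\geq k} \subset V$. I would first observe that any point of $X \setminus U$ has dimension strictly less than $k$ (else it would lie in $X_{\geq k} \subset U$), whence $\dim(X \setminus U) < k$; similarly $\dim(Y \setminus V) < k$. The easy-direction proposition then supplies equivalences $\C_k(X) \simeq \C_k(U)$ and $\C_k(Y) \simeq \C_k(V)$, and the given isomorphism $U \simeq V$ tautologically yields $\C_k(U) \simeq \C_k(V)$. Composing the three equivalences produces $\C_k(X) \simeq \C_k(Y)$.

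At this stage there is no genuine obstacle to overcome: the substantive geometric and categorical content has already been packaged, respectively, into Proposition \ref{equivlrs} (reconstruction of the LRS $X_{\geq k}$ from $\C_k(X)$), Proposition \ref{puppa} (promotion of the LRS isomorphism to a scheme isomorphism on an open neighborhood of $X_{\geq k}$), and the easy-direction proposition (invariance of $\C_k$ under removing a closed subset of dimension $< k$). The only point worth flagging is where the standing hypothesis that $\k$ be a finite type algebra over a field enters: it is needed for the easy direction in order to guarantee that the inclusion of an open subset induces an equivalence on $\C_k$ (cf.\ Remark \ref{Rydh}), whereas the hard direction actually goes through in greater generality.
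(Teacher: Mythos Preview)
Your proposal is correct and matches the paper's approach exactly: the theorem is presented as an assembly of the previous results, with the hard direction obtained by combining Proposition~\ref{equivlrs} and Proposition~\ref{puppa}, and the easy direction by the proposition on open restrictions. Your remark about where the finite-type-over-a-field hypothesis is used is also accurate.
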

Assume $\k$ is a field, and $X$ and $Y$ are irreducible.
In this context it is preferable to rephrase things in terms of codimension.
Let $\cat C^c(X) = \cat C_{\dim X - c}(X)$, and similarly for $Y$.
Recall that $X$ and $Y$ are \emph{isomorphic in codimension $c$} if there exist isomorphic open subsets $U \subset X$, $V \subset Y$, such that $\codim X \setminus U > c$, $\codim Y \setminus V > c$.
\begin{cor}
	Let $\k$ be a field, and let $X$ and $Y$ be irreducible (but not necessarily reduced).
	Then $X$ and $Y$ are isomorphic in codimension $c$ if and only if $\cat C^c(X)$ and $\cat C^c(Y)$ are equivalent.
\end{cor}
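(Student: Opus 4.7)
The plan is to reduce the corollary directly to Theorem \ref{suca}. For $X$ irreducible of dimension $n$, the condition $\codim(X \setminus U) > c$ is equivalent to $\dim(X \setminus U) < n-c$, which in turn is equivalent to $X_{\geq n-c} \subset U$. Setting $k = n-c$, ``isomorphic in codimension $c$'' for irreducible schemes therefore coincides with ``isomorphic outside dimension $k-1$'' in the sense of Theorem \ref{suca}.

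For the forward direction, if $X$ and $Y$ are isomorphic in codimension $c$, they contain isomorphic nonempty open subsets, forcing $\dim X = \dim Y = n$. With $k = n-c$, Theorem \ref{suca} immediately yields $\cat C^c(X) = \cat C_k(X) \simeq \cat C_k(Y) = \cat C^c(Y)$.

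For the reverse direction the main obstacle is that a priori a $\k$-linear equivalence $\Phi \colon \cat C^c(X) \simeq \cat C^c(Y)$ reads as $\cat C_{\dim X - c}(X) \simeq \cat C_{\dim Y - c}(Y)$ with possibly distinct indices on the two sides, so we first need to force $\dim X = \dim Y$. I plan to do this by iterating the minimal-object quotient construction of Section \ref{subdimension} on both sides. Using Lemma \ref{iso} repeatedly, the top iterate $\left(\cat C^c(X)\right)_c$ equals $\cat C_{\dim X}(X)$; and Lemma \ref{cplocale}, applied at the generic point of $X$ with $L = \O_X$ and $P$ the unique nonzero minimal object (the corresponding Serre subcategory $\cat S_P$ being trivial, since for $X$ irreducible the only point of $X_{\geq \dim X}$ is $\eta_X$, which lies in the support of every nonzero object of $\cat C_{\dim X}(X)$), identifies this top iterate with $\mod(\O_{X,\eta_X})$; symmetrically for $Y$. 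Since the iterated quotient is intrinsic to the abelian category, $\Phi$ descends to a $\k$-linear equivalence $\mod(\O_{X,\eta_X}) \simeq \mod(\O_{Y,\eta_Y})$. Such an equivalence identifies the unique simple objects and hence the residue fields, yielding $K(X) \simeq K(Y)$ as $\k$-algebras. Since $X$ and $Y$ are of finite type over the field $\k$, their Krull dimensions equal the transcendence degree of their function fields, so $\dim X = \dim Y$.

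With $\dim X = \dim Y = n$ in hand, the equivalence becomes $\cat C_k(X) \simeq \cat C_k(Y)$ for $k = n - c$, and Theorem \ref{suca} provides isomorphic open subsets $U \subset X$, $V \subset Y$ with $X_{\geq k} \subset U$ and $Y_{\geq k} \subset V$. By the translation at the start, this is exactly the statement that $X$ and $Y$ are isomorphic in codimension $c$.
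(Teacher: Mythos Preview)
Your proof is correct. The paper states this corollary without proof, treating it as an immediate consequence of Theorem~\ref{suca}, so there is nothing to compare against directly.

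That said, you have spotted and resolved a genuine subtlety the paper glosses over: Theorem~\ref{suca} is stated for a single index $k$, whereas an equivalence $\cat C^c(X)\simeq\cat C^c(Y)$ a priori reads $\cat C_{\dim X-c}(X)\simeq\cat C_{\dim Y-c}(Y)$ with possibly different indices. Your argument---iterating the intrinsic minimal-object quotient $c$ times to reach $\mod(\O_{X,\eta_X})\simeq\mod(\O_{Y,\eta_Y})$, then matching residue fields and comparing transcendence degrees---is a clean way to force $\dim X=\dim Y$. One could alternatively observe that the construction of the locally ringed space $S$ in Section~2 is intrinsic to the abelian category, so the equivalence already yields $X_{\geq \dim X-c}\simeq Y_{\geq \dim Y-c}$ as locally ringed spaces, and in particular $\O_{X,\eta_X}\simeq\O_{Y,\eta_Y}$; but your route via Remark~\ref{funziona} and Lemma~\ref{cplocale} is equally valid and arguably more self-contained. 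A minor notational point: since $X$ need not be reduced, what you call $K(X)$ is really the residue field $\kappa(\eta_X)=K(X_{\red})$, but this does not affect the transcendence-degree conclusion since $\dim X=\dim X_{\red}$.
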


\section{The group of autoequivalences}
Gabriel originally also characterized the autoequivalences of the category of coherent sheaves.
Indeed, $\Aut(\Coh(X)) = \Aut(X) \ltimes \Pic(X)$, where to form the semi-direct product we use the standard (left) action of automorphisms on line bundles: $(f,L) \mapsto f_* L = (f^{-1})^*L$.
We want to now describe what happens when passing to the quotients.

For this section, assume $\k$ is a finite type algebra over a field.
Let $\Aut_{\geq k}(X)$ be the group of birational self-maps $X \dashrightarrow X$ which are defined on an open subset containing $X_{\geq k}$, modulo the obvious equivalence relation: $f \sim g$ if the two are equal on an open subset containing all points of dimension at least $k$.
Recall the definition of $\Pic_{\geq k}(X)$ from \ref{picardo}.

\begin{thrm}\label{autoteorema}
There is an isomorphism of groups 
$$ \Aut(\cat C_k(X)) \simeq \Aut_{\geq k}(X)\ltimes \Pic_{\geq k}(X). $$
Where to form the semi-direct product the action is given by $(f,L) \mapsto (f^{-1})^*L$.
\end{thrm}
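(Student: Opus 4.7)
The plan is to construct an explicit homomorphism
\[
\Theta\colon \Aut_{\geq k}(X) \ltimes \Pic_{\geq k}(X) \longrightarrow \Aut(\cat C_k(X)), \qquad (f, L) \longmapsto \bigl(F \mapsto L \otimes (f^{-1})^* F\bigr),
\]
and show it is a bijection. The functor $\Theta(f,L)$ is well-defined on $\cat C_k(X)$: pick an open $U \supset X_{\geq k}$ on which $f^{-1}$ is regular and $L$ is invertible, apply $(f^{-1})^*(-)$ and tensor with $L$ there, and use the ``easy direction'' equivalence $\cat C_k(U) \simeq \cat C_k(X)$ to transport the result back. A direct calculation with the identity $(f_1^{-1})^*\bigl(L_2 \otimes (f_2^{-1})^*(-)\bigr) = (f_1^{-1})^* L_2 \otimes ((f_1 f_2)^{-1})^*(-)$ verifies that $\Theta$ respects the semi-direct product law $(f_1, L_1)(f_2, L_2) = (f_1 f_2,\, L_1 \otimes (f_1^{-1})^* L_2)$. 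Injectivity is then immediate: if $\Theta(f,L) \simeq \Id$, evaluating at $\O_X$ gives $L \simeq \O_X$ in $\Pic_{\geq k}(X)$, while by Proposition~\ref{equivlrs} the induced map on the locally ringed space $X_{\geq k}$ is $f$, forcing $f$ to be the identity in $\Aut_{\geq k}(X)$.

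For surjectivity, fix $\Phi \in \Aut(\cat C_k(X))$. Proposition~\ref{equivlrs} produces an isomorphism of locally ringed spaces $X_{\geq k} \to X_{\geq k}$, and Proposition~\ref{puppa} lifts it to some $f \in \Aut_{\geq k}(X)$. The object $L \coloneq \Phi(\O_X)$ lies in $\Pic_{\geq k}(X)$, since any representative satisfies the local-freeness argument given in the proof of Proposition~\ref{equivlrs}. Replacing $\Phi$ by
\[
\Phi' \coloneq (L^{-1} \otimes -) \circ \Phi \circ f^{*},
\]
we obtain an autoequivalence that sends $\O_X$ to $\O_X$ and induces the identity on $X_{\geq k}$. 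It suffices to prove any such $\Phi'$ is naturally isomorphic to the identity functor: unwinding the definition then identifies $\Phi$ with $\Theta(f, L)$, as desired.

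The main obstacle is this final normalization. At each point $x \in X_{\geq k}$, Lemma~\ref{cplocale} identifies the local quotient $\cat C_P$ with $\mod(\O_{X,x})$; under this identification $\Phi'$ descends to an autoequivalence fixing $\O_{X,x}$ and inducing the identity on $\End(\O_{X,x}) = \O_{X,x}$ (because the associated map of locally ringed spaces is trivial), and any such autoequivalence of $\mod(\O_{X,x})$ is canonically isomorphic to the identity. This yields a compatible family of stalk-wise natural isomorphisms, which must then be glued into a global natural transformation $\Phi' \Rightarrow \Id$. The plan for the gluing is to exploit the global-sections functor $F \mapsto \Hom_{\cat C_k(X)}(\O_X, F)$: by Lemma~\ref{nosheaf} and our normalization, $\Phi'$ acts as the identity on the ring $\End_{\cat C_k(X)}(\O_X) = \O_{X_{\geq k}}(X_{\geq k})$, and the stalk identifications are compatible with restriction, so that the local isomorphisms agree on overlaps and assemble into the required global natural isomorphism.
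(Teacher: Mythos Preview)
Your outline matches the paper's proof essentially step for step: the same homomorphism $\Theta$, the same reduction to an autoequivalence $\Phi'$ fixing $\O_X$ and inducing the identity on $X_{\geq k}$, and the same use of the identification $\Hom_{\cat C_k}(\O_X,M)\simeq\Hom_{\cat C_k}(\O_X,\Phi'(M))$ to build $\tau_M$. The only place where the paper does noticeably more than you is the final gluing paragraph. The paper does not attempt to assemble the stalk isomorphisms directly; instead it works on an affine patch, chooses generators $m'_1,\dots,m'_r$ of a representative of $\Phi'(M)$, transports them through the Hom-identification to sections $m_1,\dots,m_r$ of $M$, and then invokes two small lemmas (extending a stalk isomorphism of coherent sheaves to an open neighbourhood, and uniqueness of a morphism determined on all stalks in $X_{\geq k}$) to show that $m'_i\mapsto m_i$ defines an honest isomorphism in $\cat C_k$ and that the construction is independent of choices and functorial. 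Your sentence ``the local isomorphisms agree on overlaps and assemble into the required global natural isomorphism'' is exactly the statement these lemmas are there to justify; as written it is a plan rather than an argument, but the missing content is precisely what the paper supplies, so there is no conceptual gap.
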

Let $G = \Aut_{\geq k}(X)\ltimes \Pic_{\geq k}(X)$.
There is an obvious group homomorphism $G \to \Aut(\C_k)$, sending the pair $(f,L)$ to the auto-equivalence $F \mapsto (f^{-1})^* F \otimes L$.
Conversely, an auto-equivalence $\Phi$ induces an automorphism of the locally ringed space $X_{\geq k}$, and hence a birational self-map $f$, which is uniquely defined up to a subset of dimension at most $k-1$ thanks to lemma \ref{uniqiso}. 
One checks that the assignment $\Phi \mapsto (f,\Phi(\O_X))$ is also a group homomorphism.
Clearly, the composition $G \to \Aut(\C_k(X)) \to G$ is the identity.
We thus have a (split) short exact sequence
\begin{align*}
	1 \to Q \to \Aut(\C_k(X)) \to \Aut_{\geq k}(X)\ltimes \Pic_{\geq k}(X) \to 1.
\end{align*}
Proving the theorem amounts to showing that the kernel $Q$ is trivial.
To this end, suppose we have an auto-equivalence $\Phi$ such that the induced map on $X_{\geq k}$ is the identity, and $\Phi(\O_X) = \O_X$.
We will construct a natural isomorphism $\tau\colon \Phi \to \Id_{\C_k(X)}$.

\begin{lem}\label{eximodul}
let $M,M'$ be coherent sheaves on $X$ and assume that $M\otimes \O_{X,P}=M_P = M'_P$ for a point $P \in X$. Then there exists an open neighborhood $U$ of $P$ and an isomorphism of coherent sheaves $M_U \simeq M'_U$.
\end{lem}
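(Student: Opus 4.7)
The plan is to reduce to the affine case and then use the standard fact that, for coherent (hence finitely presented) sheaves over a noetherian base, the functor $\Hom$ commutes with localization. Concretely, pick an affine open neighborhood $\Spec A$ of $P$ corresponding to a prime $\mathfrak{p} \subset A$. Replacing $X$ by this neighborhood, I may assume $M = \tilde{N}$ and $M' = \tilde{N'}$ for finitely generated $A$-modules $N, N'$, and the hypothesis becomes an $A_\mathfrak{p}$-linear isomorphism $\varphi \colon N_\mathfrak{p} \xrightarrow{\sim} N'_\mathfrak{p}$.

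Since $A$ is noetherian and $N, N'$ are finitely presented, the natural map $\Hom_A(N, N') \otimes_A A_\mathfrak{p} \to \Hom_{A_\mathfrak{p}}(N_\mathfrak{p}, N'_\mathfrak{p})$ is an isomorphism. Hence there exist $f \in \Hom_A(N, N')$, $g \in \Hom_A(N', N)$ and elements $s, t \in A \setminus \mathfrak{p}$ such that $\varphi = f/s$ and $\varphi^{-1} = g/t$ after localizing at $\mathfrak{p}$. The compositions $g \circ f - st \cdot \Id_N$ and $f \circ g - st \cdot \Id_{N'}$ belong, respectively, to the finitely generated $A$-modules $\Hom_A(N,N)$ and $\Hom_A(N', N')$, and they vanish after localizing at $\mathfrak{p}$. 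By standard localization, there exists a single $u \in A \setminus \mathfrak{p}$ that annihilates both; set $h = stu \in A \setminus \mathfrak{p}$.

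Passing to the principal open $U = D(h) \subset \Spec A$ (which contains $P$), the maps $f$ and $g$ restrict to mutually inverse $A_h$-linear isomorphisms between $N_h$ and $N'_h$, yielding the desired isomorphism $M_U \simeq M'_U$ of coherent sheaves.

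The only potential subtlety is ensuring that $\Hom$ commutes with localization, which is where finite presentation (and hence noetherianity, already in force throughout the paper) is used; beyond that, the argument is bookkeeping with denominators, so I do not expect any substantive obstacle.
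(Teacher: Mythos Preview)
Your argument is correct and is essentially the same ``spread out from the stalk by clearing denominators'' idea that the paper invokes (the paper's proof is the single line ``the same argument we used for Lemma~\ref{lociso} works here''). Your packaging via the isomorphism $\Hom_A(N,N')\otimes_A A_{\mathfrak p}\simeq \Hom_{A_{\mathfrak p}}(N_{\mathfrak p},N'_{\mathfrak p})$ is a clean way to lift both $\varphi$ and $\varphi^{-1}$ simultaneously, whereas the lociso-style argument would pick generators of $N'$, write their images in $N_{\mathfrak p}$ with denominators, and then argue the resulting map is an isomorphism on a small open; the content is the same. One cosmetic nit: on $D(h)$ it is $f$ and $(st)^{-1}g$ (not $f$ and $g$ themselves) that are mutual inverses, but since $st$ is a unit in $A_h$ this is harmless.
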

\begin{proof}
The same argument we used for Lemma \ref{lociso} works here.
\end{proof}

\begin{lem}\label{uniqmodul}
let $M,M'$ be coherent sheaves on $X$ and let $f,g\colon M \rightarrow M'$ be two morphisms.
If the localizations $f_x, g_x\colon M_x \rightarrow M'_x$ are equal for all points $x \in X_{\geq k}$, then there is an open subset $X_{\geq k} \subset U$ such that the restrictions $f|_U, g|_U$ are equal.
\end{lem}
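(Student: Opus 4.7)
The plan is to reduce the statement to a vanishing result for a single morphism by considering the difference $h = f - g \colon M \to M'$. Then $h$ is a morphism of coherent sheaves whose stalks $h_x$ vanish for every $x \in X_{\geq k}$, and it suffices to exhibit an open $U \supset X_{\geq k}$ on which $h|_U = 0$; on such a $U$ we will have $f|_U = g|_U$.

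Next I would locate the locus where $h$ fails to vanish by passing to its image. Put $N = \operatorname{im}(h) \subset M'$. Since $X$ is noetherian and $M, M'$ are coherent, $N$ is coherent, and in particular its support $\operatorname{supp}(N)$ is a closed subset of $X$. The hypothesis $h_x = 0$ for every $x \in X_{\geq k}$ yields $N_x = 0$ for every such $x$, so
\[
X_{\geq k} \cap \operatorname{supp}(N) = \emptyset .
\]
Setting $U \coloneq X \setminus \operatorname{supp}(N)$ produces an open subset containing $X_{\geq k}$ on which $N$ vanishes. Because $h$ factors through $N$, this forces $h|_U = 0$, i.e.\ $f|_U = g|_U$, which is the claim.

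There is no real obstacle here: the argument is the sheafified analogue of Lemma \ref{uniqiso}, where the role played by inverting finitely many elements of $A \setminus I(p)$ is replaced by removing the closed subset $\operatorname{supp}(N)$. The only input needed is coherence of $\operatorname{im}(h)$ (so that its support is closed), which is immediate from the standing noetherian hypothesis and the fact that subsheaves of coherent sheaves on a noetherian scheme are coherent.
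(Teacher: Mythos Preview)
Your proof is correct and is essentially the same as the paper's: both reduce to $h=f-g$ and use that vanishing of a morphism of coherent sheaves is an open condition (the paper phrases this as ``being $0$ is an open property,'' while you make it explicit by taking $U = X \setminus \operatorname{supp}(\operatorname{im} h)$). If anything, your version is a bit cleaner, since the paper's appeal to the argument of Lemma~\ref{uniqiso} is really just a local restatement of the same closed-support fact.
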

\begin{proof}
The same argument as in Lemma \ref{uniqiso} shows that given each such $x$ there is an open neighborhood containing $x$ where the maps are equal.
We can then conclude by the fact that being $0$ is an open property.
\end{proof}

\begin{proof}[Proof of Theorem \ref{autoteorema}]
We wish to construct a natural isomorphism $\tau\colon \Phi \to \Id_{\C_k(X)}$, as explained earlier.
To start, assume that $X$ is affine.
Let $M$ be a coherent sheaf on $X$.
Then $$\Hom_{\cat C_k(X)}(\O_X, M)=\Hom_{\cat C_k(X)}(\Phi(\O_X),\Phi(M))=\Hom_{\cat C_k(X)}(\O_X,\Phi(M)).$$

Fix a representative $M' \in \Coh(X)$ of $\Phi(M)$, and let $m'_1,\ldots,m'_r$ generators for $M'$. To each of these corresponds a map $s'_i\colon \O_X \rightarrow M'$. Let $s_1, \ldots, s_r$ be the inverse images of these maps in $\Hom_{\cat C_k(X)}(\O_X, M)$. By Lemma \ref{hom} we can pick morphisms $S_1, \ldots, S_r \in \Hom_U(\O_U,M_U)$ where $U$ is an open subset of $X$ whose complement has dimension at most $k-1$, which in turn give us global sections $m_1, \ldots, m_r$ of $M_U$. 

Note that the assignment $m'_i \mapsto m_i$ induces an isomorphism between the local modules $M'_P$ and $M_P$ for all points $P$ of dimension $k$.
By Lemma \ref{eximodul}, up to restricting $U$ by removing a subset of dimension at most $k-1$, we can assume that the sheaf of relations between $m'_1, \ldots, m'_r$ is isomorphic to the sheaf of relations between $m_1, \ldots, m_r$, and thus the assignment $m'_i \mapsto m_i$ induces a morphism $M'_U \rightarrow M_U$ (note that this works because every open subset is sent to itself by the induced morphism on $X_{\geq k}$).

Now fix an affine covering $U_1, \ldots, U_s$ of $U$.
The assignment $m'_1, \ldots, m'_r \mapsto m_1, \ldots, m_r$ induces a morphism $M'_{U_i} \rightarrow M_{U_i}$.
Thanks to Lemma \ref{uniqmodul}, up to restricting $U$ further, we can assume that these maps glue to a morphism $M'_U \rightarrow M_U$.
Note now that $m_1, \ldots, m_r$ are generators of $M$ on an appropriate open subset due to the exact sequence $$\O_X^r \xrightarrow{\oplus s'_i} M' \rightarrow 0$$ being sent to $$\O_X^r \xrightarrow{\oplus s_i} M \rightarrow 0$$ by $\psi^{-1}$. So doing the same process we can construct a map $M_V \rightarrow M'_V$ for some open subset $V$ of $X$ whose complement has dimension at most $k-1$.
On $U \cap V$ these two maps are inverse to each other, so we have constructed an isomorphism $\tau_M\colon M \simeq M'$ in $\cat C_k(X)$.

Note now that on local rings this isomorphism is uniquely determined by the isomorphism $\Hom_{\cat C_P}(\O_X,M) \simeq \Hom_{\cat C_P}(\O_X,M')$, for each minimal object $P$.
Using Lemmas \ref{uniqmodul}, \ref{hom} we conclude that the isomorphism $\tau_M$ is unique and functorial.
We have thus proven the claim for affine schemes.
Now note that by taking an affine covering $X_i$ of a general scheme $X$ of finite type over $\k$, the maps $\tau_{M,i}$ we construct as above will glue (up to removing closed subsets of dimension at most $k-1$), giving rise to the sought after natural isomorphism $\tau$.
\end{proof}

\bibliographystyle{fabio}
\bibliography{bib}

\end{document}